\documentclass[a4paper,10pt]{article}
\usepackage{amssymb}
\usepackage{amsmath}
\usepackage{amsfonts}
\usepackage{amsthm}
\usepackage{latexsym}
\usepackage{graphicx}
\usepackage{hyperref}

\newtheorem{theorem}{Theorem}[section]
\newtheorem{lemma}[theorem]{Lemma}
\newtheorem{remark}[theorem]{Remark}
\newtheorem{proposition}[theorem]{Proposition}
\newtheorem{corollary}[theorem]{Corollary}

\begin{document}
\title{Klein-Gordon-Maxwell System\\ in a bounded domain
\footnote{The authors are supported by M.I.U.R. - P.R.I.N.
``Metodi variazionali e topologici nello studio di fenomeni non lineari''.}
}

\author{Pietro d'Avenia\\ Dipartimento di Matematica\\ Politecnico di Bari\\pdavenia@poliba.it
\and
Lorenzo Pisani, Gaetano Siciliano\\ 
Dipartimento di Matematica\\ Universit\`{a} degli Studi di Bari\\
pisani@dm.uniba.it, siciliano@dm.uniba.it
}
\maketitle

\begin{abstract}
This paper is concerned with the Klein-Gordon-Maxwell system in a bounded spatial domain. We
discuss the existence of standing waves $\psi=u(x)e^{-i\omega t}$ in
equilibrium with a purely electrostatic field $\mathbf{E}=-\nabla\phi(x)$. We
assume an homogeneous Dirichlet boundary condition on $u$ and an inhomogeneous
Neumann boundary condition on $\phi$. In the \textquotedblleft
linear\textquotedblright\ case we characterize the existence of nontrivial
solutions for small boundary data. With a suitable nonlinear perturbation in
the matter equation, we get the existence of infinitely many solutions.

\textbf{Mathematics Subject Classification 2000:} 35J50, 35J55, 35Q60
\end{abstract}

\section{Introduction}

Many recent papers show the application of global variational methods to the study of
the interaction between matter and electromagnetic fields.
A typical example is given by the Klein-Gordon-Maxwell (KGM for short) system.

We consider a matter field $\psi$, whose free
Lagrangian density is given by%
\begin{equation}                                                                    \label{l0}
 \mathcal{L}_{0}=\frac{1}{2}\left(  \left\vert \partial_t\psi \right\vert
^{2}-\left\vert \nabla\psi\right\vert ^{2} - m^{2}\left\vert \psi\right\vert
^{2}\right),
\end{equation}
with $m>0$. The field is charged and in equilibrium with its own electromagnetic field
$(\mathbf{E},\mathbf{B})$, represented by means of the gauge potentials $(\mathbf{A}%
,\phi)$,%
\begin{eqnarray*}
\mathbf{E} &  = & -\left(\nabla \phi+{\partial_t\mathbf{A}}\right),\\
\mathbf{B} &  = & \nabla\times\mathbf{A}.
\end{eqnarray*}
Abelian gauge theories provide a model for the interaction; formally we
replace the ordinary derivatives $\left(\partial_t,\nabla\right)$ in (\ref{l0}) with the so-called gauge covariant derivatives
\[
\left(\partial_t +iq\phi,\nabla -iq\mathbf{A}\right),
\]
where $q$ is a nonzero coupling constant (see \emph{e.g.}
\cite{fel}). Moreover, we add the
Lagrangian density associated with the electromagnetic field%
\[
\mathcal{L}_{1} = \frac{1}{8\pi}\left(  \left\vert \mathbf{E}\right\vert
^{2}-\left\vert \mathbf{B}\right\vert ^{2}\right)  .
\]
The KGM system is given by the Euler-Lagrange equations
corresponding to the total Lagrangian density
\[
\mathcal{L}=\mathcal{L}_{0}(\psi,\mathbf{A},\phi)+\mathcal{L}_{1}   (\mathbf{A},\phi).
\]

The study of the KGM system is carried out for special classes of
solutions (and for suitable classes of lower order nonlinear
perturbation in $\mathcal{L}_{0}$).
In this paper we consider%
\begin{align*}
\psi&  =u(x)e^{-i\omega t},\\
\phi&=\phi\left(x\right),\\
\mathbf{A}&=\mathbf{0},
\end{align*}
that is a standing wave in equilibrium with a purely electrostatic field
\begin{align*}
\mathbf{E} &  =-\nabla\phi\left(x\right),\\
\mathbf{B} &  =\mathbf{0}.
\end{align*}
Under this ansatz, the KGM system reduces to
\begin{equation}                                                                   \label{lin}
\left\{
\begin{array}{l}
-\Delta u-\left(  q\phi-\omega\right)  ^{2}u+m^{2}u=0,\\
\Delta\phi=4\pi q\left(  q\phi-\omega\right)  u^{2},
\end{array}
\right.
\end{equation}
(see \cite{bf2} or \cite{bf3} where the complete set of equations has been deducted).

We shall study (\ref{lin}) in a bounded domain
$\Omega\subset\mathbf{R}^{3}$ with smooth boundary $\partial\Omega$.
The unknowns are the real functions $u$ and $\phi$ defined on
$\Omega$ and the frequency $\omega\in\mathbf{R}$.
Throughout the paper we assume the following boundary conditions%
\begin{subequations}                                                                \label{113}
\begin{eqnarray}
 && u\left(x\right)  =0,
\\                                                                  \label{113neum}
 && \frac{\partial\phi}{\partial \mathbf{n}} \left(x\right)  =h\left(x\right).
\end{eqnarray}
\end{subequations}
The problem (\ref{lin}) has a variational structure and we apply
global variational methods.

First we notice that the system is symmetric with respect to $u$, that is, the pair
$(u,\phi)$ is a solution if and only if $(-u,\phi)$ is a solution.

Moreover, due to the Neumann condition (\ref{113neum}), the existence of solutions is independent
on the frequency $\omega$. Indeed the pair $(u,\phi)$ is a solution of (\ref{lin})-(\ref{113}) if and only if the pair $(u,\phi-\omega/q)$ is
a solution of the following problem
\begin{subequations}                                                 \label{problin}
 \begin{alignat} {2}                                                  \label{problin1}
 & -\Delta u-q^{2}\phi^{2}u+m^{2}u    =0 &  \textrm{in }\Omega,
\\
                                                                     \label{problin2}
 & \Delta\phi =4\pi q^{2}\phi u^{2} & \quad \textrm{in }\Omega,
 \end{alignat}
\end{subequations}
with the same boundary conditions (\ref{113}). In other words, for any
$\omega\in\mathbf{R}$, the existence of a standing wave $\psi=
u(x)e^{-i\omega t}$ in equilibrium with a purely electrostatic field
is equivalent to the existence of a static matter field $u(x)$, in
equilibrium with the same electric field. So we focus our attention on the problem
(\ref{problin}).

The boundary datum $h$ plays a key role.

If $h=0$, then it is easy to see that the system (\ref{problin})-(\ref{113}) have only the solutions $u=0,\phi = const$.

If $\int_{\partial\Omega}h~d\sigma=0$, then (\ref{problin})-(\ref{113})
has infinitely many solutions corresponding to $u=0$. Such solutions have the form $u=0,\phi=\chi+const$
(see Lemma \ref{defchi} below, where $\chi$ is introduced) and we call them \emph{trivial}.
In this case we are interested in finding nontrivial solutions (\emph{i.e.} solutions with $u\neq 0$).

On the other hand, it is well known that the Neumann condition gives rise to a necessary
condition for the existence of solutions of the boundary value problem. In our case, from (\ref{problin2})-(\ref{113neum}), we get%
\[
4\pi q^{2}\int_{\Omega}\phi u^{2}~dx=\int_{\partial\Omega}h~d\sigma.
\]
Hence, whenever $\int_{\partial\Omega}h~d\sigma\neq0$, solutions of (\ref{problin})-(\ref{113}), if any, are nontrivial.

The following theorem characterizes the existence of nontrivial solutions
for small boundary data.

\begin{theorem}                                                                     \label{thlin}
If $\left\|h\right\|_{H^{1/2}(\partial\Omega)}$ is sufficiently small (with respect to $m/q$), then the
problem (\ref{problin})-(\ref{113}) has nontrivial solutions $\left(
u,\phi\right)  \in H_{0}^{1}(\Omega)\times H^{1}(\Omega)$ if and only if
\[
\int_{\partial\Omega}h~d\sigma\neq0.
\]
\end{theorem}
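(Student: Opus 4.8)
The plan is to attack the system variationally, treating $\phi$ as a function of $u$ via the second equation and then analyzing the reduced functional of $u$. First I would set up the functional framework: write $\phi = \chi + v$ where $\chi$ is the (unique up to constants) solution of $\Delta\chi = 0$ in $\Omega$, $\partial\chi/\partial\mathbf{n} = h$ on $\partial\Omega$ (this requires the compatibility condition $\int_{\partial\Omega}h\,d\sigma = 0$, which is exactly why that hypothesis will be forced in one direction), and $v \in H^1(\Omega)$ is the new unknown with homogeneous Neumann data. For the ``only if'' direction, the necessary condition $4\pi q^2\int_\Omega \phi u^2\,dx = \int_{\partial\Omega}h\,d\sigma$ already noted in the text shows that if $\int_{\partial\Omega}h\,d\sigma = 0$ then any solution has $\int_\Omega \phi u^2 = 0$; combined with the sign structure one should be able to test equation \eqref{problin2} against $\phi$ itself (integrating by parts and using the homogeneous part) to force $u\equiv 0$ when $\|h\|_{H^{1/2}}$ is small — so no nontrivial solutions exist. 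Actually the cleanest route for ``only if'' is: when $\int_{\partial\Omega}h\,d\sigma = 0$, a smallness-of-$h$ a priori estimate shows $q^2\phi^2 < m^2$ pointwise (or in the relevant integral sense), whence testing \eqref{problin1} with $u$ gives $\int_\Omega|\nabla u|^2 + \int_\Omega(m^2 - q^2\phi^2)u^2 = 0$, forcing $u = 0$.

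For the ``if'' direction — the substantive part — assume $\int_{\partial\Omega}h\,d\sigma \neq 0$. The idea is that then $h$ cannot be made arbitrarily small in $H^{1/2}$ while keeping the mean fixed... no: rather, one fixes $h$ with $\int h\,d\sigma = c \neq 0$ small. Here $\chi$ no longer solves a homogeneous-compatible Neumann problem, so I would instead write $\phi = \zeta + v$ where $\zeta$ solves $\Delta\zeta = \lambda$ (a suitable constant making the Neumann problem solvable, $\lambda|\Omega| = \int_{\partial\Omega}h\,d\sigma$) with $\partial\zeta/\partial\mathbf{n} = h$. The key step is then a fixed-point or implicit-function argument: for each $u$ in a suitable ball of $H_0^1(\Omega)$, solve \eqref{problin2} for $\phi = \Phi(u)$ (the map $u \mapsto \Phi(u)$ is well-defined and smooth by Lax–Milgram applied to $-\Delta\phi + 4\pi q^2 u^2\phi = 0$ with the Neumann data, since the operator is coercive), obtaining the reduced equation $-\Delta u - q^2\Phi(u)^2 u + m^2 u = 0$ in $H_0^1(\Omega)$. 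Because $\Phi(0) = \zeta$ is nonzero (its mean is controlled by $c$) but the zeroth-order coefficient $m^2 - q^2\zeta^2$ stays positive for $\|h\|$ small, the linearized operator at $u = 0$ is invertible, yet $u = 0$ is NOT a solution of the reduced equation (since $\Phi(0) = \zeta \not\equiv$ const when $c \neq 0$) — wait, we need $u=0$ to fail: plugging $u = 0$ gives $-\Delta\cdot 0 + m^2\cdot 0 = 0$ trivially, so $u=0$ always solves the reduced $u$-equation. The nontriviality must come from elsewhere: one shows the full system with $u=0$ forces $\phi$ harmonic, contradicting $\int h \neq 0$. So the right strategy is a direct minimization or mountain-pass on the reduced functional $J(u) = \frac12\int|\nabla u|^2 + \frac12\int m^2 u^2 - \frac12\int q^2\Phi(u)^2 u^2$, showing $J$ has a nonzero critical point; here the inhomogeneous Neumann data enters through $\Phi$ and breaks the degeneracy at $0$ in the sense that $J$ is not minimized at $0$ once we account for the constraint coming from \eqref{problin2}.

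Concretely, I would: (1) prove $\Phi$ is well-defined, $C^1$, with $0 \le \Phi(u) \le $ (something) and a uniform bound $\|\Phi(u)\|_{H^1} \le C(\|h\|_{H^{1/2}} + \|u\|^2\|\Phi(u)\|)$ giving $\|\Phi(u)\|_{L^\infty}$ small when $\|h\|$ small — a maximum-principle / Stampacchia truncation argument; (2) observe that pairs $(u,\Phi(u))$ with $u$ a critical point of $J$ are exactly the $H_0^1\times H^1$ solutions; (3) since $\int_{\partial\Omega}h\,d\sigma\neq 0$, the necessary condition shows $\int_\Omega \Phi(u)u^2 \,dx = \tfrac{1}{4\pi q^2}\int h\,d\sigma \neq 0$, so in particular $u \not\equiv 0$ on the solution set — meaning every critical point of $J$ that corresponds to a genuine solution is automatically nontrivial, and conversely we must produce at least one critical point with $u \neq 0$; (4) produce it by showing $J(tu_0) < 0 = J(0)$ for suitable $u_0$ and small $t$, or by a linking/minimization argument exploiting that the quadratic-at-infinity behavior of $J$ combined with $\Phi(u) \to$ bounded yields a global minimizer which, by the necessary condition argument in (3), cannot be $u = 0$ when $c \ne 0$ — the point being that $u=0$ paired with $\Phi(0)=\zeta$ does not solve the first equation unless $\zeta$ makes $m^2u - q^2\zeta^2 u \equiv 0$, which at $u=0$ is vacuous, so instead one shows the $C^1$ reduced functional's infimum is attained at some $u \ne 0$. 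The main obstacle I anticipate is precisely this last point: showing the minimizer (or mountain-pass point) of the reduced functional is \emph{nonzero}, i.e. ruling out that the variational scheme only recovers the trivial branch. The resolution should be that for $\int h \ne 0$ the trivial branch simply does not exist as a solution of the \emph{full} system (the Neumann obstruction), so any solution the variational method produces is nontrivial, and existence of \emph{some} solution follows from coercivity/boundedness of $J$ together with a Palais–Smale analysis. A secondary technical obstacle is obtaining the $L^\infty$-smallness of $\Phi(u)$ uniformly in $u$ to guarantee $m^2 - q^2\Phi(u)^2 > 0$, which is what makes $J$ coercive and the whole scheme work — this needs a careful iteration on the second equation using the smallness of $\|h\|_{H^{1/2}}$.
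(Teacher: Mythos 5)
Your ``only if'' direction is essentially the paper's argument (reduce to $\varphi=\phi-\chi$, test both equations, and use that smallness of $\|h\|_{H^{1/2}}$ gives $m^{2}-q^{2}\chi^{2}\ge 0$, together with an $L^\infty$ bound on the solution of the second equation obtained by a maximum-principle type argument), and that part of your outline is sound. The genuine gap is in the ``if'' direction, and it occurs exactly at the point you flag as the main obstacle. First, a factual error: when $\int_{\partial\Omega}h\,d\sigma\neq0$ the map $u\mapsto\Phi(u)$ is \emph{not} defined at $u=0$ (for $u=0$ the Neumann problem (\ref{problin2})--(\ref{113neum}) is incompatible), so ``$\Phi(0)=\zeta$'' is wrong and the reduced functional lives only on $\Lambda=H_{0}^{1}(\Omega)\setminus\{0\}$, which is neither weakly closed nor complete. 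Second, the coercivity mechanism you propose cannot work in this case: the compatibility identity $q^{2}\int_{\Omega}\phi u^{2}\,dx=\kappa|\Omega|$ forces $\|\Phi(u)\|_{2}\ge |\kappa|\,|\Omega|/(q^{2}\|u\|_{4}^{2})$, so $\Phi(u)$ blows up as $u\to0$ and no uniform $L^\infty$-smallness giving $m^{2}-q^{2}\Phi(u)^{2}>0$ is available (that bound holds only for the component of $\varphi_u$ generated by $\chi$, not for the component generated by $\kappa$).

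Third, and most importantly, your resolution of the nontriviality issue --- ``the trivial branch does not exist as a solution of the full system, hence any solution produced is nontrivial'' --- does not close the argument: the danger is not that the variational scheme outputs $u=0$ as a critical point, but that minimizing or Palais--Smale sequences converge to $u=0$, where the reduction is singular, so that no critical point is produced at all. The paper closes exactly this hole quantitatively: writing $\varphi_{u}=\xi_{u}+\eta_{u}$ (the parts driven by $\chi$ and by $\kappa$ respectively), it proves $\kappa\eta_{u}\ge0$, $\|\eta_{u}\|_{2}\ge|\kappa|\,|\Omega|/(q^{2}\|u\|_{4}^{2})$ and $\|\nabla\eta_{u}\|_{2}\le c\,|\bar\eta_{u}|\,\|u\|_{4}^{2}$, whence the term $\tfrac{\kappa|\Omega|}{2}\bar\eta_{u}$ in $J$ tends to $+\infty$ as $u\to0$; this blow-up makes the sublevels of $J$ complete, prevents Palais--Smale sequences from accumulating at $0$, and lets the minimum of $J$ on $\Lambda$ be attained at some $u\neq0$. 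Coercivity and lower boundedness are then obtained not from pointwise positivity of $m^{2}-q^{2}\varphi_{u}^{2}$ but from the sign facts $\int_{\Omega}\xi_{u}\chi u^{2}\,dx\le0$, $\kappa\bar\eta_{u}\ge0$, the bound $\|\xi_{u}\|_{\infty}\le\|\chi\|_{\infty}$, and the smallness condition $m^{2}-q^{2}\chi^{2}\ge0$ involving only $\chi$. Without an estimate of this kind, showing that the reduced functional explodes (or at least stays bounded away from its infimum) near $u=0$, your scheme may fail to produce any solution.
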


We point out that the Lagrangian density ${\cal L}$ contains only the potential $W(|\psi|)= m^2 |\psi|^2/2$, which gives a positive energy (see the discussion about the energy in \cite{bf5}). Hence the solutions found in Theorem \ref{thlin} are relevant from the physical point of view.

Theorem \ref{thlin} shows that, if $q$ is sufficiently small, (\ref{problin})-(\ref{113}) has only the trivial solutions if and only if $\int_{\partial\Omega}h~d\sigma =0$. The same result holds true if $q=0$ (uncoupled system). It is immediately seen that, in the uncoupled case, if $\int_{\partial\Omega}h~d\sigma \ne 0$, then there exist no solutions at all.

Our second result is concerned with a nonlinear lower order perturbation in
(\ref{problin1}). So we study the following system
\begin{equation}                                    \label{probl}
\left\{
\begin{array}   {lll}
 -\Delta u-q^{2}\phi^{2}u+m^{2}u  =g(x,u) & & \textrm{in }\Omega,\\

 \Delta\phi  =4\pi q^{2}\phi u^{2} & & \textrm{in }\Omega,
\end{array}
\right.
\end{equation}
again with the boundary conditions (\ref{113}).
The nonlinear term $g$ is usually interpreted as a self-interaction among many
particles in the same field $\psi$.

We assume $g\in C\left(  \bar{\Omega}%
\times\mathbf{R},\mathbf{R}\right)  $ and

\begin{description}
\item[(g1)] $\exists\,a_{1},a_{2}\geq0,$ $\exists\,p\in\left(  2,6\right)  $
such that
\[
\left\vert g\left(  x,t\right)  \right\vert \leq a_{1}+a_{2}\left\vert
t\right\vert ^{p-1};
\]

\item[(g2)] $g\left(  x,t\right)  =o\left(  \left\vert t\right\vert \right)  $
as $t\rightarrow0$ uniformly in $x$;

\item[(g3)] $\exists\,s\in\left(  2,p\right]  $ and $r\geq0$ such that for
every $\left\vert t\right\vert \geq r$:
\[
0<sG\left(  x,t\right)  \leq tg\left(  x,t\right),
\]
where
\[
G\left(  x,t\right)  =\int_{0}^{t}g\left(  x,\tau\right)  \,d\tau.
\]

\end{description}

\begin{remark}
A typical nonlinearity $g$ satisfying $\mathbf{\left(  g_{1}\right)
}-\mathbf{\left(  g_{3}\right)  }$ is $g\left(  x,t\right)  =\left\vert
t\right\vert ^{p-2}t$, with $p\in\left(  2,6\right)  $.
\end{remark}

\begin{theorem}                                                                        \label{main}
Let $g$ satisfy $\mathbf{\left(  g_{1}\right)  }-\mathbf{\left(
g_{3}\right)  }$.

\begin{enumerate}
\item[a)] If $h\in H^{1/2}(\partial\Omega)$ is sufficiently small (with respect to
$m/q$) and satisfies
\begin{equation}
\label{116}
\int_{\partial\Omega}h~d\sigma=0,
\end{equation}
then the problem (\ref{probl}) has a nontrivial solution $\left(
u,\phi\right)  \in H_{0}^{1}(\Omega)\times H^{1}(\Omega)$.

\item[b)] If $g$ is odd, then, for every $h\in H^{1/2}(\partial\Omega)$
which satisfies (\ref{116}), problem (\ref{probl}) has infinitely many solutions $\left(
u_{i},\phi_{i}\right)  \in H_{0}^{1}(\Omega)\times H^{1}(\Omega)$,
$i\in\mathbf{N},$ such that
\[
\int_{\Omega}\left\vert \nabla u_{i}\right\vert ^{2}dx\rightarrow+\infty,
\]
whereas the set $\left\{  \phi_{i}\right\} $ is uniformly bounded in $H^{1}(\Omega)\cap L^\infty\left(\Omega\right)$.
\end{enumerate}
\end{theorem}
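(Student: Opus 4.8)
The strategy is the standard variational reduction for Klein-Gordon-Maxwell systems, adapted to the Neumann/Dirichlet boundary setting of this paper. First I would reformulate the second equation $\Delta\phi = 4\pi q^2 \phi u^2$ with boundary data $\partial\phi/\partial\mathbf{n} = h$: writing $\phi = \chi + \varphi$ where $\chi$ is the fixed function from Lemma \ref{defchi} (harmonic with the prescribed Neumann datum, which exists precisely because $\int_{\partial\Omega} h\,d\sigma = 0$), the unknown $\varphi$ solves $-\Delta\varphi + 4\pi q^2 u^2 \varphi = -4\pi q^2 u^2 \chi$ with homogeneous Neumann condition. For each fixed $u \in H^1_0(\Omega)$ this is a coercive linear problem, so Lax-Milgram gives a unique solution $\varphi = \varphi_u$, hence $\phi_u := \chi + \varphi_u$. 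I would then establish the qualitative properties of the map $u \mapsto \phi_u$: it is $C^1$, maps bounded sets to bounded sets in $H^1(\Omega)$, and — crucially for the $L^\infty$ claim in part b) — satisfies a uniform $L^\infty$ bound. The $L^\infty$ bound should follow from a Stampacchia truncation / Moser iteration argument applied to the equation for $\phi_u$, using that the coefficient $4\pi q^2 u^2 \geq 0$ and that $\chi \in L^\infty(\Omega)$ by elliptic regularity (since $h \in H^{1/2}(\partial\Omega)$ and $\partial\Omega$ is smooth); the key point is that the estimate on $\|\phi_u\|_\infty$ depends only on $\|\chi\|_\infty$, not on $u$, because testing with $(\phi_u - \|\chi\|_\infty)^+$ and using the sign of the zeroth-order term kills the $u$-dependence. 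I would also record the sign information $-\|\chi\|_\infty \le \phi_u \le \|\chi\|_\infty$ and the smallness: $\|\phi_u\|_{H^1}$ controlled by $\|h\|_{H^{1/2}}$ uniformly when $\|h\|_{H^{1/2}}$ is small, which is what produces the factor $m/q$ in the hypothesis.

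Next I would substitute $\phi_u$ into the functional. The natural energy functional is
\[
J(u) = \frac{1}{2}\int_\Omega |\nabla u|^2\,dx + \frac{1}{2}\int_\Omega \big(m^2 - q^2\phi_u^2\big) u^2\,dx - \frac{q^2}{2}\int_\Omega \phi_u u^2\,(\phi_u-\chi)\,dx \text{-type terms} - \int_\Omega G(x,u)\,dx,
\]
where the precise coefficient of the $\phi_u$-quadratic terms is fixed by the requirement (a routine but essential computation, as in Benci-Fortunato) that critical points of $J$ on $H^1_0(\Omega)$ correspond exactly to solutions $(u,\phi_u)$ of \eqref{probl}-\eqref{113}. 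The smallness of $\|h\|_{H^{1/2}}$, hence of $\|\phi_u\|_\infty \le \|\chi\|_\infty \le C\|h\|_{H^{1/2}}$, ensures $m^2 - q^2\phi_u^2 \ge \tfrac{m^2}{2} > 0$, so the quadratic part is positive definite and $J$ controls $\|u\|^2_{H^1_0}$ from below near the origin.

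Then I would verify the Mountain Pass geometry for part a): the origin is a strict local minimum because of (g2) and the positivity just established (the $\|u\|^p$ term from (g1) and the $\phi_u$-terms, which are higher order in $\|u\|$, are dominated by the positive quadratic part for small $\|u\|$); and $J(tu) \to -\infty$ along any fixed $u \ne 0$ because of the superquadratic condition (g3), which forces $G(x,t) \ge c|t|^s - C$ with $s > 2$, while the $\phi_u$-terms grow at most like $\|u\|^4$ and are actually sign-favorable. Verifying the Palais-Smale condition is where the main technical work lies: given a PS sequence $u_n$, one uses (g3) in the usual Ambrosetti-Rabinowitz manner — estimating $J(u_n) - \tfrac{1}{s}\langle J'(u_n), u_n\rangle$ — to get boundedness in $H^1_0(\Omega)$, but here one must carefully handle the $\phi_{u_n}$-dependent terms; the saving fact is that $u \mapsto \phi_u$ is compact (Rellich in the $u^2$ coefficient) and the relevant nonlocal terms have the right sign, so they do not spoil coercivity of the AR combination. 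Once $u_n$ is bounded, passing to the limit uses the compactness of $u \mapsto \phi_u$ and of the Sobolev embedding $H^1_0 \hookrightarrow L^p$ for $p < 6$. The Mountain Pass Theorem then gives a critical point $u \ne 0$, and $(u, \phi_u)$ is the desired nontrivial solution.

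For part b), with $g$ odd the functional $J$ is even (note $\phi_{-u} = \phi_u$, so $J(-u) = J(u)$), and I would invoke the symmetric Mountain Pass / Fountain Theorem (Ambrosetti-Rabinowitz $\mathbf{Z}_2$-version): using the eigenfunctions of $-\Delta$ on $H^1_0(\Omega)$ to build an increasing sequence of subspaces, one checks that on spheres of fixed radius in finite-dimensional pieces $J$ stays below a level that escapes to $+\infty$ (from the positive quadratic part, after subtracting the $\phi_u$ and $G$ contributions), while $J \to -\infty$ on finite-dimensional subspaces (from (g3)), yielding an unbounded sequence of critical values $c_i \to +\infty$ and corresponding critical points $u_i$. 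Since $J(u_i) \to +\infty$ while the $\phi_{u_i}$- and $G(x,u_i)$-terms are controlled — the former by the uniform bounds $\|\phi_{u_i}\|_{H^1 \cap L^\infty} \le C$ established at the outset, the latter absorbed using (g3) — it follows that $\int_\Omega |\nabla u_i|^2\,dx \to +\infty$. The uniform bound on $\{\phi_i\}$ in $H^1(\Omega) \cap L^\infty(\Omega)$ is then exactly the a priori estimate proved in the first step, which did not depend on $u$. The main obstacle throughout is the careful bookkeeping of the nonlocal $\phi_u$-terms in both the Palais-Smale verification and the linking estimates; everything else is routine once the map $u \mapsto \phi_u$ and its uniform bounds are pinned down.
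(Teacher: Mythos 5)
Your overall strategy coincides with the paper's: reduce via $\chi$ (Lemma \ref{defchi}, here with $\kappa=0$) and the Lax--Milgram solution map $u\mapsto\varphi_u$ to a single reduced functional $J_g$ on $H^1_0(\Omega)$; prove a uniform $L^\infty$ bound on $\varphi_u$ depending only on $\|\chi\|_\infty$ (you via Stampacchia truncation, the paper via the comparison $f(-|\xi_u+\min\chi|)\le f(\xi_u+\min\chi)$ for the minimizer in Lemma \ref{zetaprop}, plus $\|\nabla\xi_u\|_2\le\|\nabla\chi\|_2$); verify Palais--Smale through the Ambrosetti--Rabinowitz combination; apply the Mountain Pass Theorem for (a) and the $\mathbf{Z}_2$-version for (b); and deduce $\|\nabla u_i\|_2\to+\infty$ from $J_g(u_i)\to+\infty$ together with the uniform bound on $\{\phi_i\}$, exactly as in the paper. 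However, two points need repair. First, your claim that Lax--Milgram applies ``for each fixed $u\in H^1_0(\Omega)$'' fails at $u=0$: with homogeneous Neumann data the bilinear form $\int_\Omega\nabla\varphi\nabla\zeta\,dx+q^2\int_\Omega u^2\varphi\zeta\,dx$ is not coercive when $u=0$ (constants lie in its kernel), so $\varphi_u$ is defined only for $u\neq0$. Since both mountain-pass arguments need $J_g$ to be $C^1$ on all of $H^1_0(\Omega)$ with $J_g(0)=0$ and a strict local minimum (or sphere estimate) at the origin, you must, as the paper does, extend $J_g$ by $J_g(0)=0$ and check, using the bound $\bigl|\int_\Omega\chi(\varphi_u+\chi)u^2\,dx\bigr|\le2\|\chi\|_\infty^2\|u\|_2^2$ and $(\textbf{G}_1)$, that the extension is $C^1$ with $J_g'(0)=0$.

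Second, part (b) is asserted for \emph{every} $h$ with $\int_{\partial\Omega}h\,d\sigma=0$, with no smallness; hence the geometric condition of the symmetric theorem cannot rest on ``the positive quadratic part'', which is positive definite only when $q^2\|\chi\|_\infty^2<\lambda_1+m^2$ (your sphere condition is also stated backwards: one needs $J_g\ge\alpha>0$ on a sphere in a subspace of \emph{finite codimension}, while $J_g\le0$ outside large balls of finite-dimensional subspaces). The paper resolves this by taking $k=\min\{j:\lambda_j>q^2\|\chi\|_\infty^2-m^2\}$, $V=\bigoplus_{j=1}^{k-1}M_j$, $X=V^\perp$, and verifying $J_g(u)\ge\frac{\lambda_k+m^2-q^2\|\chi\|_\infty^2}{2\lambda_k}\|\nabla u\|_2^2-\int_\Omega G(x,u)\,dx$ on $X$; if you intend the Fountain-type variant with tail spaces, the same effect comes from $\lambda_k\to+\infty$, but this must be made explicit, otherwise your argument proves (b) only for small $h$, which is weaker than the statement. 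Note also that, thanks to the uniform $L^\infty$ bound you establish, the nonlocal term is controlled by $C\|u\|_2^2$ (quadratic, not quartic as you say), which is precisely what lets $(\textbf{G}_2)$ with any $s>2$ drive $J_g\to-\infty$ on finite-dimensional subspaces and lets the a priori bound on $\{\phi_i\}$ be independent of $i$.
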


The present paper has been motivated by some results about the system
(\ref{probl}) in the case $\Omega = \mathbf{R}^{3}$. To the best of our
knowledge, our results are the first ones in the case of a bounded
domain. Under Dirichlet boundary conditions on both $u$ and $\phi$, the
existence results for (\ref{problin}) and (\ref{probl}) are analogous and simpler (see \cite{DPS2008}).

About the system (\ref{lin}) in $\mathbf{R}^{3}$, Theorem 1.1 in \cite{dm2} shows that there exists only the trivial solution.

In the case of a lower order nonlinear perturbation (problem (\ref{probl})), the pioneering result contained in  \cite{bf2} has been generalized in several papers: see \cite{bf3}, \cite{bf4}, \cite{dm1}.
Related results on analogous systems are contained in \cite{Cass}, \cite{D'Av-Pis}.

A different class of solutions for the KGM system is introduced in the papers \cite{bf3} and \cite{bf4}, where the authors show the existence of magnetostatic and electromagnetostatic solutions (3-dimensional vortices).

>From the physical point of view, the case of a positive lower order term
\[
    W(\left|\psi\right|)=\frac{1}{2} m^2 \left|\psi\right|^2 - G\left(x, \left|\psi\right|\right)
\]
is more relevant. This case is dealt with in some very recent papers (\cite{bf5}, \cite{bf6}, \cite{Long}).

Finally, we recall that global variational methods have been used also in the study of Schroedinger-Maxwell systems (see \emph{e.g.}  \cite{dm1}, \cite{ARuiz}, \cite{bf1}, \cite{Coc-Giorg}, \cite{D'Av}, \cite{PS1}).

\section{Functional setting}

The first step to study problems (\ref{problin}) and (\ref{probl}) is to reduce to
homogeneous boundary conditions. For the sake of simplicity, up to a simple rescaling, we can omit the constant $4\pi$.

\begin{lemma}\label{defchi}
For every $h\in H^{1/2}\left(\partial\Omega\right)$, let
\begin{equation*}
 \kappa=\frac{1}{\left|\Omega \right|}\int_{\partial\Omega}h\,d\sigma.
\end{equation*}
Then, there exists a unique $\chi\in H^2\left(\Omega\right)$ solution of
\begin{equation}                                                                             \label{chi}
\left\{
\begin{array}{ll}
\Delta\chi=\kappa & \textrm{in }\Omega,\\
\displaystyle{\frac{\partial\chi}{\partial\mathbf{n}}\left(  x\right)}
=h\left(  x\right) & \textrm{on }\partial\Omega, \vspace{3pt}\\
\displaystyle{\int_{\Omega}\chi\,dx=0}.
\end{array}
\right.
\end{equation}
\end{lemma}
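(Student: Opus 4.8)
The assertion is the classical solvability theory for the Neumann problem, and the plan is to produce $\chi$ by a variational argument and then upgrade its regularity. The starting observation is that the particular constant $\kappa$ is chosen precisely so that the problem is compatible: if $\chi$ solved $\Delta\chi=\kappa$ in $\Omega$ with $\partial\chi/\partial\mathbf{n}=h$ on $\partial\Omega$, then integrating over $\Omega$ and applying the divergence theorem would give $|\Omega|\,\kappa=\int_\Omega\Delta\chi\,dx=\int_{\partial\Omega}h\,d\sigma$, which holds by definition of $\kappa$. So there is no obstruction, and we look for a weak solution.

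I would work on the closed subspace $V=\{v\in H^1(\Omega):\int_\Omega v\,dx=0\}$ and seek $\chi\in V$ with
\[
\int_\Omega\nabla\chi\cdot\nabla v\,dx=\int_{\partial\Omega}hv\,d\sigma-\kappa\int_\Omega v\,dx\qquad\text{for every }v\in V .
\]
The bilinear form $(\chi,v)\mapsto\int_\Omega\nabla\chi\cdot\nabla v\,dx$ is obviously bounded on $H^1(\Omega)$ and is coercive on $V$ by the Poincar\'e--Wirtinger inequality; the right-hand side is a bounded linear functional on $H^1(\Omega)$ since the trace operator $H^1(\Omega)\to L^2(\partial\Omega)$ is continuous and $h\in H^{1/2}(\partial\Omega)\subset L^2(\partial\Omega)$. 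Lax--Milgram then yields a unique $\chi\in V$ solving the weak problem. A short computation, splitting an arbitrary $w\in H^1(\Omega)$ as $w=v+c$ with $v\in V$ and $c$ constant and using the compatibility identity $\int_{\partial\Omega}h\,d\sigma=|\Omega|\,\kappa$, shows that the weak identity in fact holds for all $w\in H^1(\Omega)$; testing against $C^\infty_c(\Omega)$ gives $\Delta\chi=\kappa$ in $\Omega$, integrating by parts back recovers $\partial\chi/\partial\mathbf{n}=h$ on $\partial\Omega$ in the weak sense, and $\int_\Omega\chi\,dx=0$ holds because $\chi\in V$.

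It remains to obtain $H^2$ regularity and uniqueness. Since $\kappa$ is a constant, hence lies in $L^2(\Omega)$, $h\in H^{1/2}(\partial\Omega)$, and $\partial\Omega$ is smooth, the standard global elliptic estimates for the Neumann problem give $\chi\in H^2(\Omega)$; in particular the normal derivative is well defined in $H^{1/2}(\partial\Omega)$ and the boundary condition holds in the trace sense. For uniqueness, if $\chi_1,\chi_2\in H^2(\Omega)$ both solve (\ref{chi}), then $w=\chi_1-\chi_2$ is harmonic with zero normal derivative, so $\int_\Omega|\nabla w|^2\,dx=0$ by integration by parts, whence $w$ is constant, and $\int_\Omega w\,dx=0$ forces $w\equiv0$.

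I do not expect a genuine obstacle here: the only two mildly technical points are the bookkeeping that turns the constrained variational problem on $V$ into the full boundary value problem --- which is exactly where the compatibility condition built into $\kappa$ enters --- and the appeal to global $H^2$ regularity up to the smooth boundary, which is a citation to standard elliptic theory rather than an argument to be carried out in detail.
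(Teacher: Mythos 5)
Your argument is correct and complete: the compatibility identity built into $\kappa$, the Lax--Milgram argument on the zero-average subspace $V$ with coercivity from Poincar\'e--Wirtinger, the extension of the weak identity to all of $H^1(\Omega)$, the appeal to global $H^2$ elliptic regularity for the Neumann problem on a smooth domain, and the energy argument for uniqueness are exactly the standard route. The paper itself gives no proof of this lemma, treating it as classical (Remark \ref{gil} merely records the standard a priori estimate), so your proposal simply supplies the canonical argument the authors implicitly invoke.
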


\begin{remark}                                                                   \label{gil}
 It is well known that the solution of (\ref{chi}) satisfies
\begin{equation*}
 \left\|\chi \right\|_{H^2\left(\Omega\right)}\leq c\left(\left\|\kappa\right\|_2 +\left\|h\right\|_{H^{1/2}\left(\partial\Omega\right)}\right)
\end{equation*}
where $c$ is a positive constant.
So we obtain
\begin{equation*}                                                               \label{gilardi}
\left\|\chi\right\|_\infty\leq c_1\left\|h\right\|_{H^{1/2}\left(\partial\Omega\right)}.
\end{equation*}
\end{remark}

If we set
\begin{equation}                                                                 \label{cambio}
 \varphi=\phi-\chi,
\end{equation}
then (\ref{problin}) becomes
\begin{equation}
\left\{
\begin{array}{lll}
-\Delta u-q^2\left(  \varphi+\chi\right)  ^{2}u+m^{2}u=0 & &\textrm{in }\Omega,\\
\Delta\varphi=q^2\left(  \varphi+\chi\right)  u^{2}-\kappa & &\textrm{in }\Omega,\\
u\left(  x\right)  =0 &  & \textrm{on }\partial\Omega,\\
\displaystyle{\frac{\partial\varphi}{\partial\mathbf{n}}\left(  x\right)}
=0 & &\textrm{on }\partial\Omega.
\end{array}
\right.  \label{probchilin}
\end{equation}

Let us consider on $H^{1}_{0}\left(\Omega\right) $  the norm $\left\| \nabla u\right\| _{2} $ and on $H^{1}\left
( \Omega\right) $
\[
\left\| \varphi\right\| =\left( \left\| \nabla\varphi\right\| _{2}^{2}+ \left|
\bar\varphi\right| ^{2}\right) ^{1/2},
\]
where $\bar\varphi$ denotes the average of a function $\varphi$ on $\Omega$,
i.e.
\[
 \bar\varphi= \frac{1}{\left| \Omega\right| }\int_{\Omega}\varphi\, dx.
\]
Standard computations show that the solutions of (\ref{probchilin})
are  critical points of the $C^{1}$ functional
\begin{equation*}
  F\left(  u,\varphi\right) =  \frac{1}{2}\left\|\nabla
u\right\|^{2}_2+\frac{1}{2}\int_{\Omega}\left[  m^{2}-q^2\left(
\varphi+\chi\right)  ^{2}\right]  u^{2}dx-\frac{1}{2}\left\|
\nabla\varphi\right\|_2^{2}
+\kappa\left|\Omega\right|\bar\varphi,
\end{equation*}
defined in $H_{0}^{1}\left(\Omega\right)\times H^{1}\left(
\Omega\right)$.
Unfortunately it is strongly unbounded. We adapt a \emph{reduction argument} introduced in \cite{bf1}.
Let
\[
\Lambda=H_{0}^{1}\left( \Omega\right)\setminus\left\{0\right\}.
\]

\begin{lemma}                                                                              \label{lemmainvg}
For every $u\in \Lambda $ and $\rho\in L^{6/5}\left(\Omega\right)$ there exists a unique $\varphi \in H^{1}\left( \Omega\right) $ solution of
\begin{equation*}
\left\{
\begin{array}{ll}
-\Delta\varphi+q^2\varphi u^{2} =\rho & \textrm{in }\Omega,\\
\displaystyle{\frac{\partial\varphi}{\partial\mathbf{n}}\left(  x\right)  =0}
& \textrm{on } \partial\Omega.
\end{array}
\right.
\end{equation*}
\end{lemma}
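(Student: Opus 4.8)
The plan is to solve this linear Neumann problem by the Lax--Milgram theorem, applied to the bilinear form
\[
a(\varphi,v)=\int_\Omega\nabla\varphi\cdot\nabla v\,dx+q^2\int_\Omega u^2\varphi v\,dx
\]
on $H^1(\Omega)$, with right-hand side $\ell(v)=\int_\Omega\rho v\,dx$. A function $\varphi\in H^1(\Omega)$ with $a(\varphi,v)=\ell(v)$ for all $v\in H^1(\Omega)$ is exactly a weak solution of the stated problem: testing with $v\in H^1_0(\Omega)$ one recovers $-\Delta\varphi+q^2\varphi u^2=\rho$ in $\Omega$, while the homogeneous Neumann condition is the natural boundary condition encoded in the choice of the full test space $H^1(\Omega)$.

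First I would check continuity of $a$ and $\ell$ on $H^1(\Omega)$. The gradient term of $a$ is bounded by $\|\nabla\varphi\|_2\|\nabla v\|_2$. Since $\Omega\subset\mathbf{R}^3$ and $u\in H^1_0(\Omega)$, the Sobolev embedding gives $u\in L^6(\Omega)$, hence $u^2\in L^3(\Omega)$; with $H^1(\Omega)\hookrightarrow L^6(\Omega)$ and Hölder's inequality ($\tfrac13+\tfrac16+\tfrac16=1$),
\[
\Bigl|q^2\int_\Omega u^2\varphi v\,dx\Bigr|\le q^2\|u\|_6^2\,\|\varphi\|_6\,\|v\|_6\le C(u)\,\|\varphi\|\,\|v\|,
\]
so $a$ is continuous. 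Likewise $\rho\in L^{6/5}(\Omega)$ and $H^1(\Omega)\hookrightarrow L^6(\Omega)$ give $|\ell(v)|\le\|\rho\|_{6/5}\|v\|_6\le C\|\rho\|_{6/5}\|v\|$, so $\ell$ is a bounded linear functional (this is precisely where the hypothesis $\rho\in L^{6/5}(\Omega)$ is used).

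The core of the proof, and the place where $u\neq0$ is essential, is the coercivity of $a$, that is, the existence of $c=c(u)>0$ with
\[
a(\varphi,\varphi)=\|\nabla\varphi\|_2^2+q^2\int_\Omega u^2\varphi^2\,dx\ge c\bigl(\|\nabla\varphi\|_2^2+\bar\varphi^{\,2}\bigr)\qquad\forall\,\varphi\in H^1(\Omega).
\]
The difficulty is that the zeroth-order coefficient $q^2u^2$ is only nonnegative, not bounded away from zero, so it does not by itself control the mean value $\bar\varphi$; moreover a direct Hölder/Young splitting of $\int u^2\varphi^2$ produces constants that are not under control, so the estimate is genuinely non-quantitative in $u$. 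I would argue by contradiction and compactness: if no such $c$ existed there would be $\varphi_n\in H^1(\Omega)$ with $\|\varphi_n\|=1$ and $\|\nabla\varphi_n\|_2^2+q^2\int_\Omega u^2\varphi_n^2\,dx\to0$, hence $\|\nabla\varphi_n\|_2\to0$ and $\int_\Omega u^2\varphi_n^2\,dx\to0$. Being bounded in $H^1(\Omega)$, up to a subsequence $\varphi_n\rightharpoonup\varphi$ in $H^1(\Omega)$, $\varphi_n\to\varphi$ in $L^2(\Omega)$ and a.e.\ in $\Omega$; since $\|\nabla\varphi_n\|_2\to0$ we get $\nabla\varphi=0$, so $\varphi$ is constant and $\varphi=\bar\varphi=\lim_n\bar\varphi_n$, while $\|\varphi_n\|^2=\|\nabla\varphi_n\|_2^2+\bar\varphi_n^{\,2}=1$ forces $\bar\varphi_n^{\,2}\to1$, so $\varphi\equiv\pm1$. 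Then $u^2\varphi_n^2\to u^2$ a.e.\ in $\Omega$, and Fatou's lemma gives $\liminf_n\int_\Omega u^2\varphi_n^2\,dx\ge\int_\Omega u^2\,dx>0$ because $u\neq0$, contradicting $\int_\Omega u^2\varphi_n^2\,dx\to0$. With continuity and coercivity in hand, Lax--Milgram yields a unique $\varphi\in H^1(\Omega)$ solving $a(\varphi,v)=\ell(v)$ for all $v$, which is the unique solution of the Neumann problem. I expect the coercivity step to be the only non-routine point; the rest is standard linear elliptic theory.
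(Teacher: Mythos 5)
Your proof is correct and rests on the same skeleton as the paper's: Lax--Milgram applied to exactly the same bilinear form $a$ on $H^1(\Omega)$, with the same continuity estimate and the same use of $\rho\in L^{6/5}(\Omega)$ to make $v\mapsto\int_\Omega\rho v\,dx$ continuous. The genuine difference is the coercivity step. The paper does not prove $a(\varphi,\varphi)\ge c\|\varphi\|^2$ directly: it shows $a(\varphi,\varphi)\to+\infty$ as $\|\varphi\|\to+\infty$ by distinguishing the cases $\|\nabla\varphi\|_2\to+\infty$ and $\|\nabla\varphi\|_2$ bounded, in the latter case splitting $\varphi=(\varphi-\bar\varphi)+\bar\varphi$ and using Poincar\'e--Wirtinger to get $a(\varphi,\varphi)\ge q^2\bar\varphi^2\|u\|_2^2-2q^2|\bar\varphi|\,\|\varphi-\bar\varphi\|_2\|u\|_4^2\to+\infty$, and then invokes ``standard arguments'' (homogeneity of the quadratic form: a unit-norm sequence with $a(\varphi_n,\varphi_n)\to0$ can be rescaled to contradict the growth at infinity) to conclude coercivity. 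You instead prove the coercivity inequality directly by contradiction and compactness: Rellich plus a.e.\ convergence force a normalized minimizing sequence to converge to a nonzero constant, and Fatou together with $u\neq0$ (and $q\neq0$) gives the contradiction. Both routes are sound; the paper's is compactness-free and shows explicitly how $\|u\|_2>0$ enters, while yours produces exactly the Lax--Milgram hypothesis in one stroke, at the price of a non-constructive constant $c(u)$. One small slip in your continuity check: $\tfrac13+\tfrac16+\tfrac16=\tfrac23\neq1$, so the bound $q^2\|u\|_6^2\|\varphi\|_6\|v\|_6$ holds only up to a factor $|\Omega|^{1/3}$ (or estimate $\|\varphi\|_3\|v\|_3$ as the paper does); this is harmless since you absorb all constants into $C(u)$.
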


\begin{proof}
Let $u\in \Lambda$ and $\rho\in L^{6/5}\left(\Omega\right)$ be fixed.
We shall apply the Lax-Milgram Lemma.

We consider the bilinear form
\[
a\left(\varphi,\zeta\right)=\int_{\Omega }\nabla\varphi\nabla\zeta\,dx
+q^2\int_{\Omega }\varphi\zeta u^2\,dx
\]
on $H^1\left(\Omega\right)$. By the H\"older and Sobolev inequalities, we get
\begin{eqnarray*}
a\left(\varphi,\zeta\right)
&\leq & \left\|\nabla\varphi\right\|_2\left\|\nabla\zeta\right\|_2
+q^2\left\|\varphi\right\|_3\left\|\zeta\right\|_3
\left\| u\right\|_6^2\\
&\leq & \left(1+c_1 \left\|u\right\|_6^2\right)\left\|\varphi\right\|\left\|\zeta\right\|
\end{eqnarray*}
and so $a$ is continuous.
Moreover,
\[
\lim_{\left\|\varphi\right\|\rightarrow+\infty}a\left(\varphi,\varphi\right)=+\infty.
\]
Indeed, if $\left\|\varphi\right\|\rightarrow+\infty$, we distinguish two cases.
\begin{enumerate}
\item If $\left\|\nabla\varphi\right\|_2\rightarrow+\infty$, then
\[
a\left(\varphi,\varphi\right)\geq\left\|\nabla\varphi\right\|_2^2\rightarrow+\infty.
\]
\item If $\left\|\nabla\varphi\right\|_2$
 is bounded, then
$\left|\bar\varphi\right|\rightarrow+\infty$. By the
Poincar\'e-Wirtinger inequality
\begin{equation*}
\left\| \varphi-\bar\varphi\right\| _{6} \leq c_2\left\|
\nabla\varphi\right\| _{2},
\end{equation*}
also $\left\|\varphi-\bar\varphi\right\|_2$ is bounded. Then we consider 
$\varphi= (\varphi-\bar\varphi)+\bar\varphi$ and obtain
\[
a\left(\varphi,\varphi\right)\geq q^2\left|\bar\varphi\right|^2\left\|u\right\|_2^2
-2q^2\left|\bar\varphi\right|\left\|\varphi-\bar\varphi\right\|_2\left\|u\right\|_4^2
\rightarrow +\infty.
\]
\end{enumerate}
By standard arguments, we deduce that the bilinear form $a$
is coercive in $H^1\left(\Omega\right)$.

On the other hand, by the Sobolev imbedding, we can consider the linear and continuous map
\[
\zeta\in H^1\left(\Omega\right) \longmapsto\int_{\Omega}\rho\zeta\,dx\in\mathbf{R}.
\]

The Lax-Milgram Lemma gives the assertion.
\end{proof}

So our reduction argument is based on the following result.

\begin{proposition}
For every $u\in \Lambda$
there exists a unique $\varphi_{u} \in H^{1}\left( \Omega\right) $ solution
of
\begin{equation}                                                                                  \label{120}
\left\{
\begin{array}{lll}
\Delta\varphi=q^2\left(  \varphi+\chi\right)  u^{2} -\kappa & & \textrm{in }\Omega,\\
\displaystyle{\frac{\partial\varphi}{\partial\mathbf{n}}\left(  x\right)  =0}
& &\textrm{on } \partial\Omega.
\end{array}
\right.
\end{equation}
Hence the set
\begin{equation}                                                                \label{graph}
 \{(u,\varphi)\in \Lambda\times H^1(\Omega) \mid
F'_\varphi\left(u,\varphi\right)=0 \}
\end{equation}
coincides with the graph of the map
$u\in \Lambda\mapsto\varphi_u\in H^{1}(\Omega)$.
\end{proposition}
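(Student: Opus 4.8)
The plan is to recast problem (\ref{120}) into the form handled by Lemma \ref{lemmainvg}, and then to identify its (weak) solutions with the zeros of $F'_\varphi$. First I would rewrite the equation in (\ref{120}) as
\[
-\Delta\varphi+q^2\varphi u^2=\kappa-q^2\chi u^2=:\rho_u ,
\]
together with the homogeneous Neumann condition. The only thing to verify is that the right-hand side $\rho_u$ belongs to $L^{6/5}(\Omega)$, so that Lemma \ref{lemmainvg} applies. This is immediate: $\kappa$ is a constant and $\Omega$ is bounded, while by Lemma \ref{defchi} and the Sobolev embedding $H^2(\Omega)\hookrightarrow L^\infty(\Omega)$ we have $\chi\in L^\infty(\Omega)$; since $u\in H^1_0(\Omega)\hookrightarrow L^6(\Omega)$ gives $u^2\in L^3(\Omega)$, we get $\chi u^2\in L^3(\Omega)\hookrightarrow L^{6/5}(\Omega)$, hence $\rho_u\in L^{6/5}(\Omega)$. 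Thus Lemma \ref{lemmainvg}, applied with $\rho=\rho_u$, produces for each $u\in\Lambda$ a unique $\varphi_u\in H^1(\Omega)$ solving (\ref{120}).

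Next I would compute the partial differential of $F$ with respect to $\varphi$: for $\zeta\in H^1(\Omega)$,
\[
F'_\varphi(u,\varphi)[\zeta]=-\int_\Omega\nabla\varphi\cdot\nabla\zeta\,dx-q^2\int_\Omega(\varphi+\chi)u^2\zeta\,dx+\kappa\int_\Omega\zeta\,dx ,
\]
recalling that $\kappa\left|\Omega\right|\bar\zeta=\kappa\int_\Omega\zeta\,dx$. Hence $F'_\varphi(u,\varphi)=0$ means precisely that
\[
\int_\Omega\nabla\varphi\cdot\nabla\zeta\,dx+q^2\int_\Omega(\varphi+\chi)u^2\zeta\,dx=\kappa\int_\Omega\zeta\,dx\qquad\text{for all }\zeta\in H^1(\Omega),
\]
which is exactly the weak formulation of the Neumann problem (\ref{120}) (equivalently, of $-\Delta\varphi+q^2\varphi u^2=\rho_u$ with $\partial\varphi/\partial\mathbf n=0$). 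Since the test-function space is the full $H^1(\Omega)$, no boundary term is lost and the Neumann condition is genuinely encoded, which is why $\partial\varphi/\partial\mathbf n=0$ appears without being imposed.

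Combining the two points: for fixed $u\in\Lambda$, $F'_\varphi(u,\varphi)=0$ is equivalent to $\varphi$ being the weak $H^1$-solution of (\ref{120}), and by the first part this solution is unique and equal to $\varphi_u$. Therefore the set (\ref{graph}) coincides with $\{(u,\varphi_u)\mid u\in\Lambda\}$, the graph of $u\mapsto\varphi_u$.

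I do not expect a genuine obstacle here: the analytic content is entirely contained in Lemma \ref{lemmainvg} (the Lax--Milgram argument already carried out), and what remains is the routine check that $\rho_u\in L^{6/5}(\Omega)$ and the straightforward computation identifying $F'_\varphi=0$ with the weak form of (\ref{120}). The only mild subtlety worth stating explicitly is the role of the natural boundary condition: because variations of $\varphi$ range over all of $H^1(\Omega)$ rather than over $H^1_0(\Omega)$, the Euler--Lagrange equation in $\varphi$ automatically carries the homogeneous Neumann condition, consistently with (\ref{probchilin}).
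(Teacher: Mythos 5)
Your proposal is correct and follows exactly the route the paper intends: the Proposition is obtained by applying Lemma \ref{lemmainvg} with $\rho=\kappa-q^{2}\chi u^{2}$ (which lies in $L^{6/5}(\Omega)$ since $\chi\in H^{2}(\Omega)\hookrightarrow L^\infty(\Omega)$ and $u^{2}\in L^{3}(\Omega)$), and by observing that $F'_{\varphi}(u,\varphi)=0$ is precisely the weak formulation of (\ref{120}) with the Neumann condition arising naturally because test functions range over all of $H^{1}(\Omega)$. No gaps; your verification of $\rho_u\in L^{6/5}(\Omega)$ and the identification of the graph are exactly the routine checks the paper leaves implicit.
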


\begin{proposition}
The map $u\in \Lambda
\mapsto\varphi_u\in H^{1}(\Omega)$ is $C^1$.
\end{proposition}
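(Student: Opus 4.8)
The plan is to realize $u\mapsto \varphi_u$ locally, near any point of $\Lambda$, through the Implicit Function Theorem applied to the weak form of (\ref{120}). For $u\in\Lambda$ and $\varphi\in H^1(\Omega)$, let $T(u,\varphi)\in H^1(\Omega)$ be the Riesz representative of the functional
\[
\zeta\in H^1(\Omega)\longmapsto \int_\Omega\nabla\varphi\nabla\zeta\,dx+q^2\int_\Omega(\varphi+\chi)u^2\zeta\,dx-\kappa\int_\Omega\zeta\,dx ,
\]
which is continuous on $H^1(\Omega)$ by the H\"older and Sobolev inequalities (in particular $\bigl|\int_\Omega(\varphi+\chi)u^2\zeta\,dx\bigr|\le\|\varphi+\chi\|_6\|u\|_6^2\|\zeta\|_6$). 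Writing (\ref{120}) in weak form, for fixed $u$ the equation $T(u,\varphi)=0$ is exactly $\varphi=\varphi_u$; equivalently $T$ is, up to the Riesz identification, $-F'_\varphi$, so the zero set of $T$ coincides with the graph (\ref{graph}).

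Next I would verify that $T$ is of class $C^1$ (in fact $C^\infty$) on $\Lambda\times H^1(\Omega)$. It is affine in $\varphi$, so only the dependence on $u$ needs attention, and this is handled by the Sobolev embeddings: the map $u\in H_0^1(\Omega)\mapsto u^2\in L^3(\Omega)$ is a bounded quadratic map, being the restriction to the diagonal of the bounded bilinear map $(u,v)\mapsto uv$ with $\|uv\|_3\le\|u\|_6\|v\|_6$, while multiplication $L^3(\Omega)\times L^6(\Omega)\to L^2(\Omega)$ is bounded bilinear. Hence $T$ is a finite sum of bounded multilinear expressions in $(u,\varphi)$ composed with continuous linear maps, and is therefore smooth. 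This is the only place where a genuine (though routine) estimate is needed, and it is the main technical point of the argument; working with $T$ rather than directly with $F'_\varphi$ also avoids having to know that $F$ is $C^2$.

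A direct computation then shows that the partial differential $\partial_\varphi T(u,\varphi):H^1(\Omega)\to H^1(\Omega)$ is the bounded operator associated with the bilinear form
\[
a(\psi,\zeta)=\int_\Omega\nabla\psi\nabla\zeta\,dx+q^2\int_\Omega\psi\zeta u^2\,dx
\]
of Lemma \ref{lemmainvg}; in particular it does not depend on $\varphi$. Since $u\in\Lambda$, the proof of Lemma \ref{lemmainvg} shows that $a$ is continuous and coercive on $H^1(\Omega)$, so by the Lax--Milgram Lemma $\partial_\varphi T(u,\varphi)$ is a topological isomorphism of $H^1(\Omega)$ for every $(u,\varphi)\in\Lambda\times H^1(\Omega)$.

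Finally, fix $u_0\in\Lambda$ and put $\varphi_0=\varphi_{u_0}$, so that $T(u_0,\varphi_0)=0$. As $T$ is $C^1$ near $(u_0,\varphi_0)$, the set $\Lambda$ is open in $H_0^1(\Omega)$, and $\partial_\varphi T(u_0,\varphi_0)$ is invertible, the Implicit Function Theorem yields a neighborhood $U$ of $u_0$ in $\Lambda$ and a $C^1$ map $\Psi:U\to H^1(\Omega)$ with $\Psi(u_0)=\varphi_0$ and $T(u,\Psi(u))=0$ on $U$. By the uniqueness statement of the previous proposition, $\Psi(u)=\varphi_u$ for all $u\in U$. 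Thus $u\mapsto\varphi_u$ agrees near every point of $\Lambda$ with a $C^1$ map, hence is $C^1$ on all of $\Lambda$.
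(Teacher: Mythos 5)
Your proof is correct and follows essentially the same route as the paper: apply the Implicit Function Theorem to the weak form of (\ref{120}) (i.e.\ to $F'_\varphi$, in your case its Riesz representative $T$), using the continuity of the relevant derivatives and the invertibility of the partial derivative in $\varphi$, which is exactly the operator of the bilinear form handled by Lax--Milgram in Lemma \ref{lemmainvg}. The only difference is presentational: you verify smoothness of $T$ directly through bounded multilinear estimates instead of computing $F''_{\varphi\varphi}$ and $F''_{\varphi u}$ as the paper does, which is a harmless (indeed slightly more self-contained) variant.
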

\begin{proof}
Since the graph of the map
$u\mapsto\varphi_u$ is given by (\ref{graph}),
we refer to the Implicit Function Theorem.

Straightforward calculations show that for every $\xi,\eta\in H^1(\Omega)$ and $w\in H^{1}_{0}(\Omega)$
\begin{eqnarray*}
F''_{\varphi\varphi}\left(u,\varphi\right)\left[\xi,\eta\right]&=&
-\int_\Omega \nabla\xi\nabla\eta\,dx-q^2\int_\Omega u^2\xi\eta\,dx, \\
F''_{\varphi u}\left(u,\varphi\right)\left[w,\eta\right]&=&-2q^2\int_\Omega
(\varphi+\chi)uw\eta\,dx.
\end{eqnarray*}
Then it is easy to see that $F''_{\varphi\varphi}$ and $F''_{\varphi u}$ are continuous.

On the other hand we have already seen that, for every
$(u,\varphi)\in \Lambda\times H^1(\Omega)$, the operator associated
to $F''_{\varphi\varphi}(u,\varphi)$ is invertible (Lemma
\ref{lemmainvg}). Hence the claim immediately follows.
\end{proof}

We can define on $\Lambda $ the \emph{reduced} functional
\begin{equation*}
J\left( u\right) =F\left( u,\varphi_{u}\right).
\end{equation*}
It is $C^1$ and it is easy to see that $(u,\varphi)\in\Lambda\times H^1(\Omega)$ is a critical point of $F$ if and only if $u$
is a critical point of $J$ and $\varphi=\varphi_u$.
So, to get nontrivial solutions of (\ref{problin}), we look for
 critical points of the functional $J$.

\bigskip
With the same change of variable (\ref{cambio}), problem
(\ref{probl}) becomes
\begin{equation}
\left\{
\begin{array}{lll}
-\Delta u-q^2\left(\varphi+\chi\right)^{2}u+m^{2}u-g\left(x,u\right)
=0 & &\textrm{in }\Omega,\\
\Delta\varphi=q^2\left( \varphi+\chi\right)  u^{2} -\kappa& &\textrm{in }\Omega,\\
u\left(  x\right)  =0 & &\textrm{on }\partial\Omega,\\
\displaystyle{\frac{\partial\varphi}{\partial\mathbf{n}}\left(  x\right)}
=0 & &\textrm{on }\partial\Omega.
\end{array}
\right.  \label{probchi}
\end{equation}
The solutions of (\ref{probchi}) are the critical points of the $C^1$-functional
\begin{equation*}
 F_{g}\left(  u,\varphi\right)=  F\left(  u,\varphi\right)-\int_{\Omega}G\left
 (x,u\right)dx
\end{equation*}
and, as above, we can consider the \emph{reduced} $C^1$-functional
\begin{equation}
J_{g}\left( u\right) =F_{g}\left( u,\varphi_{u}\right) \label{defj}.
\end{equation}
To get nontrivial solution of (\ref{probchi}) we look for critical points of $J_g$.

\section{Behavior of $\varphi_u$}

By Lemma \ref{lemmainvg}, for every $u\in \Lambda$, problem
\begin{equation}                                                                   \label{onlychi}
\left\{
\begin{array}{lll}
\Delta\xi-q^2\xi u^{2}=q^2\chi u^{2} & &\textrm{in }\Omega, \\
\displaystyle{\frac{\partial\xi}{\partial \mathbf{n}}=0}& &\textrm{on } \partial\Omega
\end{array}
\right.
\end{equation}
has a unique solution $\xi_u \in H^1\left(\Omega\right)$.

Analogously, for every $u\in \Lambda$, problem
\begin{equation}                                                                    \label{k}
\left\{
\begin{array}{lll}
\Delta\eta-q^2\eta u^{2}=-\kappa & &\textrm{in }\Omega,\\
\displaystyle{\frac{\partial\eta}{\partial \mathbf{n}}=0}& &\textrm{on } \partial\Omega.
\end{array}
\right.
\end{equation}
has a unique solution $\eta_u \in H^1\left(\Omega\right)$.

Of course, since the solution of (\ref{120}) is unique, we have
\begin{equation} \label{aggiunta1}
\varphi_{u}=\xi_{u}+\eta_{u}.
\end{equation}

\begin{lemma}[Properties of $\xi_{u}$]                                                   \label{zetaprop}
For every $u\in \Lambda$,
\begin{equation}                                                                            \label{intmeno}
 \int_\Omega \xi_u\chi u^2\,dx\leq 0
\end{equation}
and
\begin{equation}                                                                           \label{m}
-\max\chi\leq\xi_{u}\leq-\min\chi
\end{equation}
a.e. in $\Omega$.
\end{lemma}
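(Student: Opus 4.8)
The plan is to exploit the weak formulation of the problem defining $\xi_u$: by Lemma \ref{lemmainvg}, $\xi_u\in H^1(\Omega)$ is characterized by
\[
\int_\Omega\nabla\xi_u\cdot\nabla\zeta\,dx+q^2\int_\Omega\xi_u u^2\zeta\,dx=-q^2\int_\Omega\chi u^2\zeta\,dx\qquad\text{for all }\zeta\in H^1(\Omega),
\]
and to test it with functions built from $\xi_u$ itself. Note that $\chi\in H^2(\Omega)\hookrightarrow C(\bar\Omega)$, so $\min\chi$ and $\max\chi$ are attained and (\ref{m}) is meaningful.

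For (\ref{intmeno}) I would simply take $\zeta=\xi_u$. This yields
\[
\left\|\nabla\xi_u\right\|_2^2+q^2\int_\Omega\xi_u^2u^2\,dx=-q^2\int_\Omega\chi u^2\xi_u\,dx;
\]
the left-hand side is nonnegative and $q^2>0$, so $\int_\Omega\xi_u\chi u^2\,dx\le0$.

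For (\ref{m}) I would argue by a maximum-principle-type comparison. To prove $\xi_u\le-\min\chi$, set $M=-\min\chi$ and test with $\zeta=(\xi_u-M)^+\in H^1(\Omega)$, which is nonnegative and has gradient $\nabla\xi_u\,\mathbf{1}_{\{\xi_u>M\}}$. Rearranging the weak identity gives
\[
\int_\Omega\bigl|\nabla(\xi_u-M)^+\bigr|^2dx+q^2\int_\Omega(\xi_u+\chi)u^2(\xi_u-M)^+\,dx=0 .
\]
On $\{\xi_u>M\}$ we have $\xi_u>M=-\min\chi\ge-\chi$, hence $\xi_u+\chi>0$ there, so the second integrand is $\ge0$ everywhere; both nonnegative terms must therefore vanish. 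From the Dirichlet term and the connectedness of $\Omega$, $(\xi_u-M)^+$ equals a constant $k\ge0$; if $k>0$ then $\xi_u\equiv M+k$ and the second term becomes $q^2k\int_\Omega(M+k+\chi)u^2\,dx$, which is strictly positive because $M+\chi\ge0$, $k>0$ and $u\not\equiv0$ (as $u\in\Lambda$), a contradiction. Hence $k=0$, i.e. $\xi_u\le-\min\chi$ a.e. The bound $\xi_u\ge-\max\chi$ is obtained symmetrically: set $m_0=-\max\chi$, test with $(m_0-\xi_u)^+$, and use that $\xi_u+\chi<0$ on $\{\xi_u<m_0\}$ and $m_0+\chi\le0$ to reach the same conclusion.

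The computations are routine; the only delicate point is the last step of each comparison, where—once the energy term has forced the relevant positive part to be constant—one must rule out a nonzero constant. This is precisely where the hypothesis $u\in\Lambda$ (so $u^2\not\equiv0$) together with the sign of $M+\chi$ (resp. $m_0+\chi$) is used. One should also be careful with signs to get the two one-sided estimates, and invoke connectedness of $\Omega$ (arguing component by component otherwise).
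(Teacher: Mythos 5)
Your proof is correct, and for the two-sided bound (\ref{m}) it follows a genuinely different route from the paper, while the proof of (\ref{intmeno}) (testing the weak formulation of (\ref{onlychi}) with $\xi_u$ itself) is exactly the paper's argument. For (\ref{m}) the paper does not use truncations of the test function: it observes that the shifted function $\xi_u+\min\chi$ (resp.\ $\xi_u+\max\chi$) is the unique solution, hence the unique minimizer, of an auxiliary convex functional
\[
f(\xi)=\frac12\int_\Omega|\nabla\xi|^2dx+\frac{q^2}{2}\int_\Omega\xi^2u^2dx+q^2\int_\Omega(\chi-\min\chi)u^2\xi\,dx,
\]
and compares $f\left(-\left|\xi_u+\min\chi\right|\right)\le f\left(\xi_u+\min\chi\right)$; since replacing $\xi$ by $-|\xi|$ does not increase $f$ (the coefficient $\chi-\min\chi\ge0$), uniqueness of the minimizer forces $\xi_u+\min\chi=-\left|\xi_u+\min\chi\right|\le0$, and symmetrically for the lower bound. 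You instead run a Stampacchia-type weak maximum principle: test with $(\xi_u-M)^+$ and $(m_0-\xi_u)^+$, use the sign of $\xi_u+\chi$ on the relevant level sets, and then exclude a nonzero constant via connectedness of $\Omega$ and $u\in\Lambda$. Your approach only needs the weak formulation (not the identification of $\xi_u$ as a minimizer), at the price of the extra final step ruling out the constant, which you handle correctly; the paper's argument is shorter because uniqueness from Lemma \ref{lemmainvg} absorbs that step and no explicit appeal to connectedness is needed. Both proofs are sound.
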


\begin{proof}
Multiplying (\ref{onlychi}) by $\xi_u$ and integrating on $\Omega$,
we get immediately (\ref{intmeno}).

Moreover, if $\xi_u$ is the solution of (\ref{onlychi}), then $\xi_u+\min\chi$
is the unique solution of
\begin{equation*}
\left\{
\begin{array}{lll}
\Delta \xi =q^2 \left[ \xi +(\chi-\min\chi) \right]u^{2} & &\textrm{in }\Omega, \\
\displaystyle{\frac{\partial \xi}{\partial \mathbf{n}}\left( x\right) =0} & &\textrm{on }
\partial \Omega
\end{array}
\right.
\end{equation*}
and minimizes the functional
\[
f\left( \xi \right) =\frac{1}{2 }\int_{\Omega }\left| \nabla
\xi \right| ^{2}\,dx+\frac{q^2}{2}\int_{\Omega }\xi
^{2}u^{2}\,dx+q^2\int_{\Omega }\left(\chi-\min\chi\right) u^{2}\xi\,dx
\]
on $H^1\left(\Omega\right)$.
On the other hand
\[
f\left(-\left|\xi_{u}+\min\chi\right|\right)\leq f\left(\xi_{u}+\min\chi\right)
\]
and so
\[
\xi_{u}+\min\chi=-\left|\xi_{u}+\min\chi\right|,
\] 
a.e. in $\Omega$.
Hence $\xi_u\leq -\min\chi$, 
a.e. in $\Omega$.

Analogously, $\xi_u+\max\chi$
is the unique solution of
\[
\left\{
\begin{array}{lll}
\Delta \xi =q^2 \left[ \xi +(\chi-\max\chi) \right]u^{2} & &\textrm{in }\Omega,\\
\displaystyle{\frac{\partial \xi}{\partial \mathbf{n}}\left( x\right) =0} & &\textrm{on }\partial \Omega
\end{array}
\right.
\]
and, arguing as before, we get $\xi_u\geq -\max\chi$
a.e. in $\Omega$.\end{proof}

\begin{corollary}                                                                 \label{estimates}
For every $u\in \Lambda$,
\begin{equation}                                                                  \label{stimaphi}
 \left\| \xi_{u}\right\| _{\infty}\leq\left\| \chi\right\|_{\infty},
\end{equation}
\begin{equation}                                                                   \label{stima gradfi}
\left\| \nabla\xi_{u}\right\| _{2}\leq\left\| \nabla\chi\right\|_{2}.
\end{equation}
\end{corollary}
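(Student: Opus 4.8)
The plan is to read off both inequalities directly from the information already collected in Lemma~\ref{zetaprop} and from the equation~(\ref{onlychi}) that characterizes $\xi_u$, without introducing anything new.

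For (\ref{stimaphi}) there is essentially nothing to do. Since $\chi\in H^2(\Omega)\hookrightarrow C(\bar\Omega)$, the numbers $\max\chi$ and $\min\chi$ are finite and satisfy $-\left\|\chi\right\|_\infty\le\min\chi\le\max\chi\le\left\|\chi\right\|_\infty$. Hence the two-sided bound~(\ref{m}), namely $-\max\chi\le\xi_u\le-\min\chi$ a.e.\ in $\Omega$, immediately gives $-\left\|\chi\right\|_\infty\le\xi_u\le\left\|\chi\right\|_\infty$ a.e., which is exactly (\ref{stimaphi}).

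For (\ref{stima gradfi}) I would rewrite (\ref{onlychi}) as $\Delta\xi_u=q^2(\xi_u+\chi)u^2$ and test it with $\xi_u+\chi\in H^1(\Omega)$. Integrating by parts and using the homogeneous Neumann condition $\partial\xi_u/\partial\mathbf{n}=0$ to discard the boundary term, one gets
\[
-\left\|\nabla\xi_u\right\|_2^2-\int_\Omega\nabla\chi\cdot\nabla\xi_u\,dx=q^2\int_\Omega(\xi_u+\chi)^2u^2\,dx\ \ge\ 0 .
\]
Consequently $\left\|\nabla\xi_u\right\|_2^2\le-\int_\Omega\nabla\chi\cdot\nabla\xi_u\,dx\le\left\|\nabla\chi\right\|_2\left\|\nabla\xi_u\right\|_2$ by the Cauchy--Schwarz inequality, and dividing by $\left\|\nabla\xi_u\right\|_2$ (the case $\nabla\xi_u=0$ being trivial) yields (\ref{stima gradfi}).

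The only step that requires a little thought is the choice of test function: testing (\ref{onlychi}) directly with $\xi_u$ leaves on the right-hand side the term $-q^2\int_\Omega\chi\,\xi_u u^2\,dx$, which is nonnegative by (\ref{intmeno}) but cannot be controlled by $\left\|\nabla\chi\right\|_2$ in any obvious way; it is the shift by $\chi$ that turns the zeroth-order contribution into the manifestly nonnegative quantity $q^2\int_\Omega(\xi_u+\chi)^2u^2\,dx$ and lets Cauchy--Schwarz close the estimate. Apart from that, the argument is routine and I expect no genuine obstacle.
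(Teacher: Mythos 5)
Your proof is correct and follows essentially the same route as the paper: (\ref{stimaphi}) is read off from (\ref{m}), and (\ref{stima gradfi}) is obtained by testing the weak formulation of (\ref{onlychi}) with $\xi_u+\chi$, discarding the nonnegative term $q^2\int_\Omega(\xi_u+\chi)^2u^2\,dx$, and applying the Cauchy--Schwarz inequality. No gaps.
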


\begin{proof}
The inequality (\ref{stimaphi}) easily follows from
(\ref{m}). By (\ref{onlychi}),
$\xi_u$ satisfies
\[
\int_\Omega{\nabla\xi_u\nabla w\,dx}
+q^2\int_\Omega\left(\xi_u +\chi\right)u^2w\,dx=0
\]
for any $w\in H^1\left(\Omega\right)$. For $w=\xi_u+\chi$ we get
\[
\left\|\nabla\xi_u\right\|_2^2+\int_\Omega\nabla\xi_u\nabla\chi \,dx
+q^2\int_\Omega\left(\xi_u+\chi\right)^2u^2 \,dx=0
\]
from which one deduces (\ref{stima gradfi}).
\end{proof}

\begin{remark}                                             \label{unifbound}
We point out that, if $\kappa=0$, then $\varphi_u=\xi_u.$
Therefore (\ref{stimaphi}) and (\ref{stima gradfi}) become uniform estimates on $\varphi_u\in H^1\left(\Omega\right)\cap L^\infty\left(\Omega\right)$ and give rise to estimates on the \emph{old} variable
\[
\phi=\varphi_u+\chi=\xi_u+\chi.
\]
In other words, if $\int_{\partial\Omega}h~d\sigma = 0$, the solutions $\phi$ of (\ref{problin2})-(\ref{113neum}) are uniformly bounded with respect to $u\ne 0$.
>From (\ref{m}) we deduce also a more precise estimate
\begin{equation*}
\left\|\phi\right\|_\infty = \left\| \xi_{u}+\chi\right\| _{\infty}\leq\max\chi-\min\chi.
\end{equation*}
\end{remark}

\begin{lemma}
[Properties of $\eta_{u}$]For every $u\in \Lambda$,
\begin{equation}                                                                              \label{eta1}
\left\| \eta_{u} \right\| _{2}\geq\frac{\left| \kappa\right|
\left| \Omega\right| }{q^2\left\| u\right\| _{4}^{2}},
\end{equation}
\begin{equation}                                                                              \label{eta2}
\kappa\eta_{u}\geq0
\end{equation}
a.e. in $\Omega$ and
\begin{equation}                                                                             \label{spezz}
\left\| \nabla\eta_{u}\right\| _{2}\leq c_1 \left| \bar\eta_{u}
\right| \left\| u\right\| _{4}^{2}.
\end{equation}
\end{lemma}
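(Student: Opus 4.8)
The plan is to work throughout from the weak formulation of~(\ref{k}): $\eta_u\in H^1(\Omega)$ is characterized by
\[
\int_\Omega \nabla\eta_u\nabla w\,dx + q^2\int_\Omega \eta_u u^2 w\,dx = \kappa\int_\Omega w\,dx\qquad\text{for all }w\in H^1(\Omega),
\]
and to obtain the three estimates by inserting three judiciously chosen test functions $w$ (legitimate since $1$, $\eta_u-\bar\eta_u$ and the truncations of $\eta_u$ all belong to $H^1(\Omega)$, $\Omega$ being bounded).

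For~(\ref{eta1}) I would take $w\equiv 1$, which kills the gradient term and yields $q^2\int_\Omega \eta_u u^2\,dx=\kappa|\Omega|$; Cauchy--Schwarz then gives $|\kappa||\Omega|\le q^2\int_\Omega|\eta_u|u^2\,dx\le q^2\|\eta_u\|_2\|u\|_4^2$, which is the claim. For~(\ref{eta2}) it suffices, by the uniqueness in Lemma~\ref{lemmainvg} (note that $-\eta_u$ solves~(\ref{k}) with $\kappa$ replaced by $-\kappa$), to prove that $\kappa>0$ forces $\eta_u\ge0$ a.e.\ — the case $\kappa=0$ being trivial and $\kappa<0$ following by this symmetry. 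Assuming $\kappa>0$, I would test with the negative part $w=\eta_u^-=\max\{-\eta_u,0\}$. Using $\nabla\eta_u\cdot\nabla\eta_u^-=-|\nabla\eta_u^-|^2$ and $\eta_u u^2\eta_u^-=-(\eta_u^-)^2u^2$ a.e., the left-hand side becomes $-\|\nabla\eta_u^-\|_2^2-q^2\int_\Omega(\eta_u^-)^2u^2\,dx\le0$, while the right-hand side $\kappa\int_\Omega\eta_u^-\,dx\ge0$; hence both sides vanish, so $\eta_u^-$ is a.e.\ constant and $\int_\Omega(\eta_u^-)^2u^2\,dx=0$, and since $u\not\equiv0$ that constant must be $0$, i.e.\ $\eta_u\ge0$ a.e.

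For~(\ref{spezz}) I would test with the zero-mean function $w=\eta_u-\bar\eta_u$, so the right-hand side vanishes; splitting $\eta_u=(\eta_u-\bar\eta_u)+\bar\eta_u$ in the mass term gives
\[
\|\nabla\eta_u\|_2^2 + q^2\int_\Omega(\eta_u-\bar\eta_u)^2u^2\,dx = -q^2\bar\eta_u\int_\Omega u^2(\eta_u-\bar\eta_u)\,dx .
\]
Discarding the nonnegative second term on the left, bounding the right-hand side by $q^2|\bar\eta_u|\,\|u\|_4^2\,\|\eta_u-\bar\eta_u\|_2$ via Cauchy--Schwarz, and finally invoking the Poincar\'e--Wirtinger inequality $\|\eta_u-\bar\eta_u\|_2\le c\|\nabla\eta_u\|_2$ yields $\|\nabla\eta_u\|_2^2\le c_1|\bar\eta_u|\,\|u\|_4^2\,\|\nabla\eta_u\|_2$, and dividing by $\|\nabla\eta_u\|_2$ (the inequality being vacuous when this vanishes) gives~(\ref{spezz}). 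None of these steps is deep; the only point needing genuine care is the sign bookkeeping in~(\ref{eta2}) — placing the truncation $\eta_u^-$ on the correct side, using the correct formula for $\nabla\eta_u^-$, and phrasing the reduction from arbitrary $\kappa$ to $\kappa>0$ via the uniqueness statement. Everything else is a matter of picking the right test function and applying H\"older and Poincar\'e.
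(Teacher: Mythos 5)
Your proposal is correct, and for (\ref{eta1}) and (\ref{spezz}) it is essentially the paper's own argument: the paper integrates the equation in (\ref{k}) over $\Omega$ (your test function $w\equiv 1$) and then multiplies by $\eta_u-\bar\eta_u$, splitting off the average and using H\"older plus Poincar\'e--Wirtinger exactly as you do (your bookkeeping even keeps the factor $q^2$ that the paper silently absorbs into the constant $c_1$). The genuine difference is in (\ref{eta2}): the paper does not use a truncation test function but instead exploits the fact that $\eta_u$ is the minimizer of the convex functional $f^*(\eta)=\tfrac12\int_\Omega|\nabla\eta|^2\,dx+\tfrac{q^2}{2}\int_\Omega\eta^2u^2\,dx-\kappa|\Omega|\bar\eta$ and compares $f^*(\eta_u)$ with $f^*(\pm|\eta_u|)$, ``with analogous arguments to those used in the proof of (\ref{m})''; this reuses the machinery of Lemma \ref{zetaprop} and gives the sign of $\eta_u$ directly from the minimality. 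Your route — reducing to $\kappa>0$ by linearity and uniqueness, then testing with $\eta_u^-$ and reading off that both sides must vanish — is a self-contained weak-maximum-principle argument that never invokes the variational characterization of $\eta_u$, at the modest price of the sign bookkeeping you mention (and of the observation, which you handle correctly, that the vanishing of the right-hand side $\kappa\int_\Omega\eta_u^-\,dx$ already forces $\eta_u^-=0$ a.e.). Both arguments are complete; yours is slightly more elementary, the paper's more uniform with the treatment of $\xi_u$.
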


\begin{proof}
Let $u\in \Lambda$ be fixed.
If $\kappa=0$, the lemma is trivial. So we suppose $\kappa\neq 0$.

By integrating the equation in (\ref{k}) on $\Omega$ we get
\[
q^2\int_\Omega \eta_u u^2\,dx=\kappa\left|\Omega \right|,
\]
from which we deduce (\ref{eta1}).

Moreover, since the unique solution $\eta_u$ of (\ref{k}) is the minimizer of
\begin{equation*}
f^*\left( \eta \right)=\frac{1}{2}\int_\Omega \left|\nabla \eta\right|^2\,dx+
\frac{q^2}{2}\int_\Omega \eta^2u^2\,dx-\kappa\left|\Omega\right|\bar\eta,
\end{equation*}
with analogous arguments to those used in the proof of (\ref{m}), we have that:
\begin{itemize}
\item if $\kappa<0$, then $\eta_u\leq 0$ a.e. in $\Omega$;
\item if $\kappa>0$, then $\eta_u\geq 0$ a.e. in $\Omega$.
\end{itemize}
Finally, multiplying the equation in (\ref{k}) by $\eta_u-\bar\eta_u$ and integrating, we get
\[
-\left\|\nabla\eta_u\right\|_2^2-q^2\int_\Omega\eta_u\left(\eta_u-\bar\eta_u \right)u^2\,dx=0
\]
from which
\[
\left\|\nabla\eta_u\right\|_2^2+q^2\int_{\Omega}\left(\eta_u-\bar\eta_u \right)^2 u^2\,dx=
-\bar\eta_u \int_{\Omega}\left(\eta_u-\bar\eta_u \right) u^2\,dx.
\]
Then, by the H\"older and Poincar\'e-Wirtinger inequalities, we obtain
\[
\left\|\nabla\eta_u\right\|_2^2\leq
\left|\bar\eta_u \right|\left\|\eta_u-\bar\eta_u \right\|_2\left\|u\right\|_4^2\leq
c_1\left|\bar\eta_u \right|\left\|\nabla\eta_u\right\|_2\left\|u\right\|_4^2
\]
which implies (\ref{spezz}).
\end{proof}

Finally we have the following relation between $\xi_{u}$ and $\eta_{u}$.

\begin{lemma}
For every $u\in \Lambda$,
\begin{equation}                                                          \label{misto}
q^2\int_{\Omega}\chi\eta_{u}u^{2}\,dx=-\kappa\left|\Omega\right|\bar\xi_{u}.
\end{equation}

\end{lemma}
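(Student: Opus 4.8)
The identity to prove is
\[
q^2\int_{\Omega}\chi\,\eta_{u}\,u^{2}\,dx=-\kappa\left|\Omega\right|\bar\xi_{u}.
\]
The natural idea is to test the equation defining one of the two auxiliary functions against the other, exploiting the symmetry of the bilinear form
\[
b(v,w)=\int_{\Omega}\nabla v\,\nabla w\,dx+q^2\int_{\Omega}v\,w\,u^{2}\,dx
\]
that appears in Lemma \ref{lemmainvg}. Indeed, rewriting (\ref{onlychi}) in weak form, $\xi_u$ satisfies $b(\xi_u,w)=-q^2\int_\Omega \chi u^2 w\,dx$ for all $w\in H^1(\Omega)$, while rewriting (\ref{k}) in weak form, $\eta_u$ satisfies $b(\eta_u,w)=\kappa\int_\Omega w\,dx$ for all $w\in H^1(\Omega)$.

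The plan is to choose $w=\eta_u$ in the first identity and $w=\xi_u$ in the second. From the first I get
\[
b(\xi_u,\eta_u)=-q^2\int_\Omega \chi\,u^2\,\eta_u\,dx,
\]
and from the second, using that $\int_\Omega \xi_u\,dx=\left|\Omega\right|\bar\xi_u$,
\[
b(\eta_u,\xi_u)=\kappa\int_\Omega \xi_u\,dx=\kappa\left|\Omega\right|\bar\xi_u.
\]
Since $b$ is symmetric, $b(\xi_u,\eta_u)=b(\eta_u,\xi_u)$, and equating the two right-hand sides yields exactly
\[
-q^2\int_\Omega \chi\,u^2\,\eta_u\,dx=\kappa\left|\Omega\right|\bar\xi_u,
\]
which is (\ref{misto}). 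The case $\kappa=0$ is already covered since then $\eta_u=0$ and both sides vanish; for $\kappa\neq 0$ the argument above applies verbatim.

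There is essentially no obstacle here: both $\xi_u$ and $\eta_u$ lie in $H^1(\Omega)$ by Lemma \ref{lemmainvg}, so they are admissible test functions, and all integrals are finite because $u\in H^1_0(\Omega)\hookrightarrow L^6(\Omega)$, $\chi\in L^\infty(\Omega)$, and $\xi_u,\eta_u\in L^6(\Omega)$, so the products $\chi u^2\eta_u$ and $u^2\xi_u\eta_u$ are integrable by Hölder. The only point requiring a word of care is making sure the weak formulations of (\ref{onlychi}) and (\ref{k}) are written with consistent signs (the Laplacian terms in the original systems carry a $+\Delta$, so an integration by parts produces $-\int\nabla\cdot\nabla$, which is why $b$ as defined above is the correct positive-definite form); once the bookkeeping of signs is fixed, the result follows in two lines.
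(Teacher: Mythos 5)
Your proof is correct and follows essentially the same route as the paper: the authors likewise multiply the equation (\ref{onlychi}) by $\eta_u$ and the equation (\ref{k}) by $\xi_u$, integrate by parts, and observe that the two left-hand sides coincide, which is precisely your symmetry of the form $b$. The only difference is presentational (you name the bilinear form explicitly), so nothing further is needed.
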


\begin{proof}
Fixed $u\in \Lambda$, multiplying the equation of (\ref{onlychi}) by
$\eta_u$ and integrating on $\Omega$, we get
\[
-\int_\Omega \nabla\xi_u\nabla\eta_u\,dx-q^2\int_\Omega \xi_u\eta_u u^2\,dx=
q^2\int_\Omega \chi\eta_u u^2\,dx.
\]
Multiplying the equation of (\ref{k}) by $\xi_u$ and integrating on $\Omega$, we obtain
\[
-\int_\Omega \nabla\xi_u\nabla\eta_u\,dx-q^2\int_\Omega \xi_u\eta_u u^2\,dx=-
\kappa\left|\Omega\right|\bar\xi_{u}.
\]
The claim immediately follows.
\end{proof}

\section{Proof of Theorem \ref{thlin}}
Taking into account Remark \ref{gil}, in this section we assume that $\left\|h\right\|_{H^{1/2}\left(\partial\Omega\right)}$ is sufficiently small in order to get
\[
 \left\|\chi\right\|_\infty\leq m/q,
\]
hence
\begin{equation} \label{small}
 m^2-q^2\chi^2\geq 0.
\end{equation}

\subsection{Existence of nontrivial solutions}

In this subsection we assume that
$\int_{\partial\Omega}h~d\sigma\neq0$.

We give the explicit expression of the functional
$J(u)=F(u,\varphi_u)$. If $u\in \Lambda$, multiplying (\ref{120}) by
$\varphi_{u}$ and integrating on $\Omega$, we have
\begin{equation*}
-\left\| \nabla\varphi_{u}\right\| _{2}^{2} = q^2
\int_{\Omega}\varphi_{u}\left( \varphi_{u}+\chi\right) u^{2}\,dx-\kappa\left|\Omega\right|\bar\varphi_{u}.
\end{equation*}
Then, taking into account (\ref{aggiunta1}) and (\ref{misto}), we obtain
\begin{equation}                                                                       \label{i}
J\left( u\right) =\frac{1}{2}\left\| \nabla u\right\| _{2}^{2}+\frac{1}{2}\int_{\Omega
}\left(m^{2}-q^2\chi^{2}\right) u^{2}\,dx -\frac{q^2}{2}\int_{\Omega}\xi_{u} \chi u^{2}
\,dx+\kappa\left|\Omega\right|\bar\xi_{u}+\frac{\kappa\left|\Omega\right|}
{2}\bar\eta_{u}.
\end{equation}
Moreover, for every $v\in H^1_0\left(\Omega\right)$,
\begin{equation}                                                                          \label{jprimolin}
 \left\langle J'\left(u\right),v\right\rangle =\left\langle F'(u,\varphi_u),v\right\rangle=\int_\Omega\nabla u\nabla v\,dx+\int_\Omega \left[m^2-q^2\left(\varphi_u+\chi\right)^2\right]uv\,dx.
\end{equation}

\begin{proposition}                                                                       \label{propJ}
The functional $J$ has the following properties:
\begin{enumerate}
\item[\emph{(a)}] $\displaystyle{\lim_{u\rightarrow0}J(u)}=+\infty$,
\item[\emph{(b)}] $J$ is coercive,
\item[\emph{(c)}] $J$ is bounded from below.
\end{enumerate}
\end{proposition}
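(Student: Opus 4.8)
The plan is to read off the explicit expression (\ref{i}) for $J$ term by term, using the estimates of Section~3. In (\ref{i}) the term $\frac12\|\nabla u\|_2^2$ is nonnegative; the term $\frac12\int_\Omega(m^2-q^2\chi^2)u^2\,dx$ is nonnegative by (\ref{small}); the term $-\frac{q^2}{2}\int_\Omega\xi_u\chi u^2\,dx$ is nonnegative by (\ref{intmeno}); and the term $\frac{\kappa|\Omega|}{2}\bar\eta_u$ is nonnegative, since (\ref{eta2}) gives $\kappa\eta_u\ge0$ a.e.\ and hence $\kappa\bar\eta_u\ge0$. The only possibly negative term is $\kappa|\Omega|\bar\xi_u$, but (\ref{stimaphi}) (equivalently (\ref{m})) yields $|\bar\xi_u|\le\|\xi_u\|_\infty\le\|\chi\|_\infty$, so that $\kappa|\Omega|\bar\xi_u\ge-C$, where $C:=|\kappa|\,|\Omega|\,\|\chi\|_\infty$ is independent of $u$. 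Therefore
\begin{equation*}
J(u)\ \ge\ \tfrac12\|\nabla u\|_2^2-C\qquad\text{for every }u\in\Lambda .
\end{equation*}
From this inequality (c) is immediate, and (b) follows at once, since $J(u)\to+\infty$ whenever $\|\nabla u\|_2\to+\infty$.

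For (a) this bound is not enough near $u=0$, and one must exploit the growth of $\bar\eta_u$; here the hypothesis $\kappa\neq0$ of this subsection is essential. Since $\kappa\eta_u\ge0$, the last term of (\ref{i}) equals $\tfrac12|\kappa|\,|\Omega|\,|\bar\eta_u|$, so it is enough to show $|\bar\eta_u|\to+\infty$ as $u\to0$ in $H^1_0(\Omega)$. Splitting $\eta_u=(\eta_u-\bar\eta_u)+\bar\eta_u$ and combining the H\"older and Poincar\'e--Wirtinger inequalities with (\ref{spezz}), one gets
\begin{equation*}
\|\eta_u\|_2\ \le\ \|\eta_u-\bar\eta_u\|_2+|\bar\eta_u|\,|\Omega|^{1/2}\ \le\ |\bar\eta_u|\bigl(c\,\|u\|_4^2+|\Omega|^{1/2}\bigr),
\end{equation*}
with $c$ absorbing the Sobolev and Poincar\'e--Wirtinger constants, while (\ref{eta1}) gives $\|\eta_u\|_2\ge|\kappa|\,|\Omega|/(q^2\|u\|_4^2)$. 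Comparing the two and recalling that $\|u\|_4\to0$ as $u\to0$ (Sobolev embedding of $H^1_0(\Omega)$ into $L^4(\Omega)$), one obtains, for $u$ small, $|\bar\eta_u|\ge c'\|u\|_4^{-2}$ for some $c'>0$; hence
\begin{equation*}
J(u)\ \ge\ \tfrac12\|\nabla u\|_2^2+\tfrac12|\kappa|\,|\Omega|\,c'\,\|u\|_4^{-2}-C\ \longrightarrow\ +\infty
\end{equation*}
as $u\to0$, which is (a).

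The sign bookkeeping of the first paragraph is routine. The one step needing a little care is the estimate $|\bar\eta_u|\gtrsim\|u\|_4^{-2}$ for small $u$: it rests on coupling the lower bound (\ref{eta1}) with the gradient bound (\ref{spezz}) through Poincar\'e--Wirtinger, and on the fact that the factor $c\|u\|_4^2+|\Omega|^{1/2}$ stays bounded (indeed tends to $|\Omega|^{1/2}$) as $u\to0$, so that it can be absorbed into the constant without degeneracy. Everything else is a direct consequence of the a priori estimates already established for $\xi_u$ and $\eta_u$.
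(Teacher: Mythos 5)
Your proof is correct and follows essentially the same route as the paper: the same sign bookkeeping on the terms of (\ref{i}) using (\ref{small}), (\ref{intmeno}), (\ref{eta2}) and (\ref{stimaphi}) gives $J(u)\geq\frac12\|\nabla u\|_2^2-C$, hence (b) and (c), and the blow-up of $\bar\eta_u$ as $u\to0$ rests on coupling (\ref{eta1}) with (\ref{spezz}) via Poincar\'e--Wirtinger, exactly the ingredients of the paper's argument. The only difference is cosmetic: you derive the quantitative bound $|\bar\eta_u|\gtrsim\|u\|_4^{-2}$ directly, whereas the paper reaches $|\bar\eta_u|\to+\infty$ by contradiction.
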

\begin{proof}
Assume $u\rightarrow 0$. Since the first four terms in (\ref{i}) are bounded from below, we study the last term. By (\ref{eta2}),
\begin{equation}\label{29'}
\frac{\kappa\left|\Omega\right|}{2}\,\bar\eta_{u}\geq 0.
\end{equation}
We claim that $\left|\bar\eta_u\right|\rightarrow +\infty$.

Arguing by contradiction, assume that there exists a sequence
$u_n\rightarrow 0$ such that $\{\bar\eta_n\}$ is bounded (where we
mean $\eta_n=\eta_{u_n}$). Hence, by (\ref{spezz}), we have
$\left\|\nabla\eta_n\right\|_2\rightarrow 0 $. Then, using the
Poincar\'e-Wirtinger inequality, we deduce that $\{\eta_n\}$ is
bounded. On the other hand (\ref{eta1}) yields
\begin{equation*}
 \lim_n\left\|\eta_n\right\|_2=+\infty,
\end{equation*}
so we get a contradiction and (a) is proved.

By (\ref{intmeno}), (\ref{small}) and (\ref{29'}), we obtain
\begin{equation*}
 J\left(u\right)\geq \frac{1}{2}\left\|\nabla u\right\|_2^2+\kappa\left|\Omega\right|\bar\xi_{u}.
\end{equation*}
Then, by (\ref{stimaphi}), we deduce (b) and (c).
\end{proof}

\begin{proposition}
The functional J satisfies the Palais-Smale condition on $\Lambda$, i.e. every sequence $\left\{ u_{n}\right\} \subset \Lambda$ such that
$\left\{J(u_n)\right\}$ is bounded and $J^{\prime}\left( u_{n}\right)\rightarrow 0$, admits a converging subsequence in $\Lambda$.
\end{proposition}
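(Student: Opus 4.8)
The plan is to establish the usual three ingredients of a compactness argument for the reduced functional: a Palais--Smale sequence $\{u_n\}\subset\Lambda$ is bounded in $H^1_0(\Omega)$, the associated potentials $\varphi_{u_n}$ are bounded in $H^1(\Omega)$, and along a subsequence $u_n$ converges strongly to a limit which is still nonzero. So let $\{u_n\}\subset\Lambda$ be such that $\{J(u_n)\}$ is bounded and $J'(u_n)\to 0$. Since $J$ is coercive (Proposition \ref{propJ}(b)), $\{u_n\}$ is bounded in $H^1_0(\Omega)$, hence $\{\|u_n\|_4\}$ and $\{\|u_n\|_6\}$ are bounded and, up to a subsequence, $u_n\rightharpoonup u$ in $H^1_0(\Omega)$, $u_n\to u$ in $L^p(\Omega)$ for every $p\in[1,6)$ and a.e.\ in $\Omega$; moreover $\langle J'(u_n),v\rangle\to0$ uniformly for $v$ in bounded sets of $H^1_0(\Omega)$.

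The heart of the matter is the bound on $\{\varphi_{u_n}\}$, because here $\kappa\ne 0$ and a priori $\eta_{u_n}$ might blow up. I would read off the sign structure of the expression (\ref{i}) for $J(u_n)$: the first term is nonnegative, the second is nonnegative by (\ref{small}), the third is nonnegative by (\ref{intmeno}), the term $\kappa|\Omega|\bar\xi_{u_n}$ is bounded by (\ref{stimaphi}) (which yields $|\bar\xi_{u_n}|\le\|\chi\|_\infty$), and $\frac{\kappa|\Omega|}{2}\bar\eta_{u_n}\ge 0$ by (\ref{eta2}). Since $\{J(u_n)\}$ is bounded above, these facts force $\kappa\bar\eta_{u_n}$ to be bounded, hence $\{\bar\eta_{u_n}\}$ to be bounded because $\kappa\ne 0$. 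Then (\ref{spezz}) together with the bound on $\|u_n\|_4$ gives a bound on $\{\|\nabla\eta_{u_n}\|_2\}$, so $\{\eta_{u_n}\}$ is bounded in $H^1(\Omega)$; combined with Corollary \ref{estimates} for $\{\xi_{u_n}\}$ and the decomposition (\ref{aggiunta1}), the sequence $\{\varphi_{u_n}\}$ is bounded in $H^1(\Omega)$, in particular in $L^6(\Omega)$.

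It remains to pass to the limit. First I would rule out $u=0$: by (\ref{jprimolin}), $\langle J'(u_n),u_n\rangle\to 0$ reads $\|\nabla u_n\|_2^2+\int_\Omega[m^2-q^2(\varphi_{u_n}+\chi)^2]u_n^2\,dx\to 0$; if $u=0$ then $u_n\to 0$ in $L^3(\Omega)$, whence by H\"older $\int_\Omega(\varphi_{u_n}+\chi)^2u_n^2\,dx\le\|\varphi_{u_n}+\chi\|_6^2\|u_n\|_3^2\to 0$ and also $m^2\|u_n\|_2^2\to0$, which would give $u_n\to0$ in $H^1_0(\Omega)$, contradicting the boundedness of $\{J(u_n)\}$ via Proposition \ref{propJ}(a). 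Hence $u\in\Lambda$ and $\varphi_u$ is well defined. For strong convergence, using (\ref{jprimolin}),
\[
\langle J'(u_n)-J'(u),u_n-u\rangle=\|\nabla(u_n-u)\|_2^2+m^2\|u_n-u\|_2^2-q^2\int_\Omega\big[(\varphi_{u_n}+\chi)^2u_n-(\varphi_u+\chi)^2u\big](u_n-u)\,dx;
\]
the left-hand side tends to $0$ (because $J'(u_n)\to 0$ with $\{u_n-u\}$ bounded, while $J'(u)$ is fixed and $u_n-u\rightharpoonup 0$), $m^2\|u_n-u\|_2^2\to0$, and, by the H\"older inequality and the $L^6$ bounds already obtained, the integral is dominated by $\big(\|\varphi_{u_n}+\chi\|_6^2\|u_n\|_6+\|\varphi_u+\chi\|_6^2\|u\|_6\big)\|u_n-u\|_2\to0$. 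Therefore $\|\nabla(u_n-u)\|_2\to0$, i.e.\ $u_n\to u$ strongly in $\Lambda$.

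I expect the main obstacle to be the uniform bound on $\{\varphi_{u_n}\}$, equivalently on $\{\bar\eta_{u_n}\}$, in the present case $\kappa\ne 0$: this is precisely the point where the one-sided sign information (\ref{eta2}) must be combined with the a priori energy bound, since none of the estimates proved for $\eta_u$ bounds it from above by itself. Once $\{\varphi_{u_n}\}$ is bounded, the lower-order term $q^2(\varphi_{u_n}+\chi)^2$ is under control in $L^3$ and the rest is the routine weak-to-strong convergence argument relying on the compact Sobolev embeddings.
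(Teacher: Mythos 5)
Your proposal is correct and follows essentially the same route as the paper: boundedness of $\{u_n\}$ from coercivity, the bound on $\bar\eta_{u_n}$ extracted from the sign structure of (\ref{i}) (nonnegativity of the first terms, $|\bar\xi_{u_n}|\le\|\chi\|_\infty$ from (\ref{stimaphi}), and (\ref{eta2})) followed by (\ref{spezz}) to bound $\{\eta_{u_n}\}$, and Proposition \ref{propJ}(a) to exclude $u=0$. The only immaterial difference is in the final compactness step, where you use the identity for $\left\langle J'(u_n)-J'(u),u_n-u\right\rangle$ while the paper writes $\Delta u_n=m^2u_n-q^2(\xi_n+\eta_n+\chi)^2u_n-J'(u_n)$ and invokes standard arguments on the right-hand side.
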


\begin{proof}
Let $\left\{u_n\right\}\subset \Lambda$
be a Palais-Smale sequence, i.e.
\begin{equation}                                                                    \label{ips1}
 \left\{J\left(u_n\right)\right\} \textrm{ bounded}
\end{equation}
and
\begin{equation*}											
 J'\left(u_n\right)\rightarrow 0.
\end{equation*}
>From (\ref{ips1}) and (b) of Proposition \ref{propJ} we deduce that $ \left\{u_n\right\}$ is bounded, hence it converges weakly to $u\in H_0^1(\Omega)$.
It remains to prove that the convergence is strong and that $u\neq0$.
As before, for the sake of simplicity, we set $\varphi_n=\varphi_{u_n}$, $\xi_n=\xi_{u_n}$ and
$\eta_n=\eta_{u_n}$.

By (\ref{jprimolin}) and (\ref{aggiunta1}), we have
\begin{equation}                                                                      \label{iprimo}
\Delta u_n =m^2 u_n  -q^2\left(\xi_n + \eta_n + \chi\right)^2 u_n  -J^{\prime}\left(u_n\right).
\end{equation}
So it is sufficient to prove that the right
hand side of (\ref{iprimo}) is bounded in $H^{-1}\left(\Omega\right)$. 
Since $u_n \rightharpoonup u$ and $J'\left(u_n\right)\rightarrow 0$, we have only to study 
$\{ \left(\xi_n + \eta_n + \chi\right)^2 u_n\}$.
>From (\ref{ips1}) we deduce that $\left\{\kappa\left|\Omega\right|\bar\eta_n/2\right\}$ is bounded, the same being true for the first four terms in $J(u_n)$.
Then, using (\ref{spezz}), we conclude that $\left\{\eta_n\right\}$ is bounded, as well as $\left\{\xi_n\right\}$ by (\ref{stimaphi}). The claim easily follows.

Finally (a) of Proposition \ref{propJ} and (\ref{ips1}) show that $u$ cannot be zero. The proof is thereby complete.
\end{proof}

Using again (a) of Proposition (\ref{propJ}), we can see that the sublevels of $J$ are complete. Then, by a standard tool in critical point theory (Deformation Lemma, see \emph{e.g.} \cite{rab}), we conclude that the minimum of $J$ is achieved.

\subsection{The \emph{only if} part}

In this subsection we show that if $\int_{\partial\Omega}h\,d\sigma=0$, then problem (\ref{probchilin}) has only trivial solutions.

Let $\left(  u,\varphi\right)$
be a solution of (\ref{probchilin}) with $\kappa=0$. By the first equation we have
\begin{equation}
\left\Vert \nabla u\right\Vert _{2}^{2}-q^2\int_{\Omega}\left(  \varphi
+\chi\right)  ^{2}u^{2}\,dx+m^{2}\left\Vert u\right\Vert _{2}^{2}=0.
\label{banal10}
\end{equation}

By the second equation we have

\begin{equation}                                                                 \label{banal17}
-\left\| \nabla\varphi\right\|_2^2-q^2\int_{\Omega
}u^{2}\varphi^{2}dx  =q^2\int_{\Omega}\chi\varphi u^{2}dx.
\end{equation}
Then, substituting $\int_{\Omega}\chi\varphi u^{2}dx$ in (\ref{banal10}), we obtain
\begin{equation*}                                                                 
\left\Vert \nabla u\right\Vert _{2}^{2}+q^2\int_\Omega u^2 \varphi^2\,dx+\int_{\Omega}\left(  m^{2}
-q^2\chi^{2}\right)  u^{2}dx+2\left\Vert \nabla\varphi
\right\Vert _{2}^{2}=0.
\end{equation*}

Therefore, taking into account (\ref{small}), we deduce $u=0$.

\section{Proof of Theorem \ref{main}}
In this section we assume $\kappa=0$, so we have
\begin{equation}                                                                                                                                    \label{eta0}
\varphi_u=\xi_u.
\end{equation}

Since $\varphi_u$ satisfies (\ref{banal17}), substituting in (\ref{defj}), we find, for every $u\neq0$,
\begin{equation*}                                                                        
J_{g}\left( u\right) =\frac{1}{2}\left\| \nabla u\right\|
^{2}_2+ \frac{m^2}{2}\int_{\Omega} u^{2}\,dx-\frac{q^2}{2}\int_{\Omega}
\chi\left( \varphi_{u}+\chi\right) u^{2}\,dx-\int_{\Omega} G\left( x,u\right)
\,dx
\end{equation*}
and
\begin{equation}                                                                       \label{jprimo}
 \left\langle J_{g}'\left(u\right),v\right\rangle =\int_\Omega\nabla u\nabla v\,dx+\int_\Omega \left[m^2-q^2\left(\varphi_u+\chi\right)^2\right]uv\,dx-\int_\Omega g\left(x,u\right)v\,dx
\end{equation}
for $v\in H^1_0\left(\Omega\right)$.

About the nonlinear term, we recall that $\mathbf{\left( g_{1}\right) }-\mathbf{\left(
g_{3}\right) }$ imply that:
\begin{enumerate}
 \item[$\left(\textbf{G}_1\right)$] for every $\varepsilon>0$ there exists $A\geq0$
such that for every $t\in\mathbf{R}$
\[
\left|  G\left( x,t\right)  \right|  \leq\frac{\varepsilon}{2}
t^{2}+A\left| t\right| ^{p};
\]
\item[$\left(\textbf{G}_2\right)$] there exist two constants $b_{1},b_{2}>0$ such that
for every $t\in\mathbf{R}$
\[
G\left( x,t\right)  \geq b_{1}\left| t\right| ^{s}-b_{2}.
\]
\end{enumerate}

This time the functional has not a singularity in $0$, but it can be extended according
to the following proposition.

\begin{proposition}
If we set $J_{g}\left( 0\right) =0$, then the functional $J_g$ is $C^{1}$ on
$H_{0}^{1}\left( \Omega\right) $ with $J_g^{\prime}\left( 0\right) =0$.
\end{proposition}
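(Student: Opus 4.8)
The plan is to reduce everything to the behaviour at the origin, since $J_g$ is already of class $C^1$ on $\Lambda=H^1_0(\Omega)\setminus\{0\}$, and to exploit that in the present section $\kappa=0$, so that $\varphi_u=\xi_u$ satisfies the \emph{uniform} bounds of Corollary~\ref{estimates}, i.e. $\|\varphi_u\|_\infty\le\|\chi\|_\infty$ and $\|\nabla\varphi_u\|_2\le\|\nabla\chi\|_2$ for every $u\in\Lambda$. First I would prove that $J_g$ is continuous at $0$ (so that the assignment $J_g(0)=0$ indeed extends it continuously). From the explicit expression of $J_g(u)$ written just before the statement, the three quadratic terms $\tfrac12\|\nabla u\|_2^2$, $\tfrac{m^2}{2}\|u\|_2^2$ and $\tfrac{q^2}{2}\int_\Omega\chi(\varphi_u+\chi)u^2\,dx$ are all $O(\|\nabla u\|_2^2)$ by the Poincar\'e inequality together with $\|\varphi_u\|_\infty\le\|\chi\|_\infty$, while $(\mathbf{G_1})$ and the Sobolev embedding $H^1_0(\Omega)\hookrightarrow L^p(\Omega)$, $p\in(2,6)$, give $\bigl|\int_\Omega G(x,u)\,dx\bigr|\le\tfrac{\varepsilon}{2}c\|\nabla u\|_2^2+Ac'\|\nabla u\|_2^p$; as $\varepsilon>0$ is arbitrary and $p>2$, all contributions tend to $0$ when $u\to0$ in $H^1_0(\Omega)$.

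Next I would show that the derivative, which is defined and continuous on $\Lambda$, satisfies $J_g'(u)\to0$ in $H^{-1}(\Omega)$ as $u\to0$; equivalently, the extension by $J_g'(0)=0$ is continuous on $H^1_0(\Omega)$. Using the formula (\ref{jprimo}) for $\langle J_g'(u),v\rangle$, the term $\int_\Omega\nabla u\nabla v\,dx+\int_\Omega[m^2-q^2(\varphi_u+\chi)^2]uv\,dx$ is bounded by $\bigl(\|\nabla u\|_2+C\|u\|_2\bigr)\|v\|_{H^1_0}$ because of the uniform bound on $\varphi_u$; moreover $(\mathbf{g_1})$ and $(\mathbf{g_2})$ imply that for every $\varepsilon>0$ there is $C_\varepsilon\ge0$ with $|g(x,t)|\le\varepsilon|t|+C_\varepsilon|t|^{p-1}$, whence by H\"older and Sobolev $\bigl|\int_\Omega g(x,u)v\,dx\bigr|\le c''\bigl(\varepsilon\|u\|_2+C_\varepsilon\|u\|_p^{p-1}\bigr)\|v\|_{H^1_0}$. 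Taking the supremum over $\|v\|_{H^1_0}\le1$ and letting $u\to0$ (again $\varepsilon$ arbitrary, $p-1>1$) gives $\|J_g'(u)\|_{H^{-1}}\to0$.

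To finish I would invoke the standard fact that a functional continuous on an open set $U$, of class $C^1$ on $U\setminus\{x_0\}$, whose derivative extends continuously to $U$, is of class $C^1$ on $U$ with that extension as derivative. Concretely, for $u\neq0$ one writes $J_g(u)-J_g(0)=\int_0^1\langle J_g'(tu),u\rangle\,dt$ — legitimate because $tu\in\Lambda$ for $t\in(0,1]$, $J_g$ is continuous at $0$, and $t\mapsto\langle J_g'(tu),u\rangle$ extends continuously to $[0,1]$ — and then $|J_g(u)-J_g(0)|\le\|u\|_{H^1_0}\sup_{0\le t\le1}\|J_g'(tu)\|_{H^{-1}}=o(\|u\|_{H^1_0})$ by the continuity of the extended derivative at $0$; hence $J_g$ is Fr\'echet differentiable at $0$ with $J_g'(0)=0$ and $J_g\in C^1(H^1_0(\Omega))$. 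I do not expect a genuine obstacle; the one point deserving care is that the equation defining $\varphi_u$ degenerates as $u\to0$, but convergence of $\varphi_u$ is never needed — every term of $J_g$ and of $\langle J_g'(u),\cdot\rangle$ carries a factor $u$ or $u^2$, so the uniform $L^\infty$ and $H^1$ estimates of Corollary~\ref{estimates} are enough.
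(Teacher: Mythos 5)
Your proposal is correct and rests on exactly the same ingredients as the paper's proof: since $\kappa=0$ gives $\varphi_u=\xi_u$, the uniform bound $\left\|\varphi_u\right\|_\infty\le\left\|\chi\right\|_\infty$ yields $\bigl|\int_\Omega\chi(\varphi_u+\chi)u^2\,dx\bigr|\le 2\left\|\chi\right\|_\infty^2\left\|u\right\|_2^2$, and this together with $(\mathbf{G}_1)$ and the bound $|g(x,t)|\le\varepsilon|t|+C_\varepsilon|t|^{p-1}$ gives, as in the paper, continuity of $J_g$ at $0$ and $\|J_g'(u)\|_{H^{-1}}\to 0$ as $u\to 0$. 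The only (harmless) deviation is the final step: the paper gets differentiability at $0$ directly from the estimate $J_g(u)=O(\|\nabla u\|_2^2)+O(\|\nabla u\|_2^p)=o(\|\nabla u\|_2)$, whereas you deduce it from the continuity of the extended derivative via the fundamental theorem of calculus along the ray $t\mapsto tu$; both arguments are valid.
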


\begin{proof}
>From (\ref{stimaphi}) and (\ref{eta0}) we deduce
\begin{equation}                                                                  \label{limint}
\left|\int_{\Omega }\chi\left(\varphi_u+\chi\right)u^2\,dx\right|
\leq 2\left\|\chi\right\|_\infty ^2\left\| u\right\|_2^2.
\end{equation}
Then it is easy to see that
\[
\lim_{u\rightarrow 0}J_{g}\left(u\right)=0,
\]
hence $J_g$ is continuous on $H_{0}^{1}\left(\Omega\right)$.

Using again (\ref{limint}) and $\left(\textbf{G}_1\right)$, we obtain
\[
\lim_{u\rightarrow 0}\frac{J_{g}\left(u\right)}{\left\|\nabla u\right\|_2}=0,
\]
which, joint with $J_{g}\left(0\right)=0$, implies that $J_{g}$ is differentiable
in $0$ and $J_{g}'\left(0\right)=0$.

Finally, we have that $J_{g}'$ is continuous in $0$. Indeed, from (\ref{jprimo}) we get
\[
\left|\left<J_{g}'\left(u\right),v\right>\right|\leq\left\|\nabla u
\right\|_2\left\|\nabla v\right\|_2
+\left(4q^2\left\|\chi\right\|_\infty^2+m^2\right)\left\|u\right\|_2\left\|v\right\|_2
+\int_{\Omega }\left|g\left(x,u\right)v\right|\,dx.
\]
Then, using the hypotheses on $g$,
\[
\lim_{u\rightarrow 0}\left\|J_{g}'\left(u\right)\right\|=\lim_{u\rightarrow 0}\sup_{\left\|\nabla v\right\|_2=1}\left|\left<J_{g}'\left(u\right),v\right>\right|=0.
\]
\end{proof}

\begin{proposition}
The functional $J_g$ satisfies the Palais-Smale condition on $H^{1}_{0}\left(
\Omega\right) $.
\end{proposition}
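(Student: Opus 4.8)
The plan is to verify the standard Palais--Smale condition for $J_g$ on $H^1_0(\Omega)$ in two stages: first show that any Palais--Smale sequence is bounded, then extract a strongly convergent subsequence. Let $\{u_n\}\subset H^1_0(\Omega)$ satisfy $\{J_g(u_n)\}$ bounded and $J_g'(u_n)\to 0$ in $H^{-1}(\Omega)$. For boundedness I would compute the combination $J_g(u_n)-\tfrac{1}{s}\langle J_g'(u_n),u_n\rangle$, which is the classical device exploiting the Ambrosetti--Rabinowitz condition $\mathbf{(g_3)}$ (equivalently $(\mathbf{G}_2)$). Using the explicit formulas derived above for $J_g(u)$ and $\langle J_g'(u),v\rangle$, together with the uniform bound $\|\varphi_{u_n}\|_\infty\le\|\chi\|_\infty$ from (\ref{stimaphi})--(\ref{eta0}), the terms involving $q^2\chi(\varphi_{u_n}+\chi)u_n^2$ are controlled by $2\|\chi\|_\infty^2\|u_n\|_2^2$; since $s>2$ and $\|\chi\|_\infty\le m/q$ by (\ref{small}), the coefficient of $\|\nabla u_n\|_2^2$ on the left is a positive multiple of $(\tfrac12-\tfrac1s)$ while the $L^2$-terms with the wrong sign are absorbed by $m^2\|u_n\|_2^2$ (again because $m^2-q^2\chi^2\ge0$); the nonlinear contribution $\int_\Omega(\tfrac1s u_ng(x,u_n)-G(x,u_n))\,dx\ge 0$ up to a constant for $|u_n|\ge r$, and the bounded region $|u_n|<r$ contributes a bounded quantity. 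This yields $\|\nabla u_n\|_2^2\le C(1+\|\nabla u_n\|_2)$, hence boundedness in $H^1_0(\Omega)$.

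Once $\{u_n\}$ is bounded, up to a subsequence $u_n\rightharpoonup u$ weakly in $H^1_0(\Omega)$, strongly in $L^t(\Omega)$ for $t\in[1,6)$, and a.e. in $\Omega$. The next step is the standard argument: from (\ref{jprimo}), writing $\langle J_g'(u_n),u_n-u\rangle\to 0$, one has
\[
\|\nabla u_n\|_2^2-\int_\Omega\nabla u_n\nabla u\,dx
=\langle J_g'(u_n),u_n-u\rangle-\int_\Omega\bigl[m^2-q^2(\varphi_{u_n}+\chi)^2\bigr]u_n(u_n-u)\,dx
+\int_\Omega g(x,u_n)(u_n-u)\,dx.
\]
The first term on the right tends to $0$ by hypothesis. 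For the last term, $\mathbf{(g_1)}$ and the $L^p$-convergence give $\int_\Omega g(x,u_n)(u_n-u)\,dx\to0$ by H\"older (since $p<6$, $g(x,u_n)$ is bounded in $L^{p/(p-1)}$ and $u_n-u\to0$ in $L^p$). For the middle term the crucial point is that $\{\varphi_{u_n}\}$ is bounded in $L^\infty(\Omega)$ uniformly by (\ref{stimaphi}), so $\bigl[m^2-q^2(\varphi_{u_n}+\chi)^2\bigr]u_n$ is bounded in $L^2(\Omega)$ and, paired against $u_n-u\to0$ in $L^2(\Omega)$, the integral vanishes. Hence $\|\nabla u_n\|_2^2\to\|\nabla u\|_2^2$, which together with weak convergence gives $u_n\to u$ strongly in $H^1_0(\Omega)$.

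The main obstacle — and the place where the reduction machinery of the previous sections does its real work — is controlling the coupling term $q^2(\varphi_{u_n}+\chi)^2u_n$ both in the boundedness estimate and in the compactness argument, since $\varphi_{u_n}$ depends nonlinearly and nonlocally on $u_n$ and one has no obvious continuity of $u\mapsto\varphi_u$ with respect to weak convergence. The resolution is precisely Corollary \ref{estimates}: the sign condition (\ref{intmeno}) and the pointwise bound (\ref{m}) make $\varphi_{u_n}=\xi_{u_n}$ uniformly bounded in $L^\infty(\Omega)$ independently of $u_n$ (Remark \ref{unifbound}), so no delicate passage to the limit on $\varphi_{u_n}$ is needed — the potential term behaves like a bounded zeroth-order perturbation. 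One should also note, for honesty, that a Palais--Smale sequence \emph{a priori} lies in $H^1_0(\Omega)$ rather than in $\Lambda$, but since $J_g$ has been extended to a $C^1$ functional on all of $H^1_0(\Omega)$ with $J_g(0)=0$ and $J_g'(0)=0$, the argument above applies verbatim, the possible limit $u=0$ being an admissible (critical) point and causing no difficulty for the convergence claim.
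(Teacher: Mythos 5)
Your compactness step (extract a weak limit, test $J_g'(u_n)$ against $u_n-u$, and use the uniform $L^\infty$ bound on $\varphi_{u_n}$ from Corollary \ref{estimates} together with the compact Sobolev embeddings) is correct and is essentially what the paper means by ``standard arguments''. The gap is in the boundedness step. This proposition is needed for part (b) of Theorem \ref{main}, where $h$ is an \emph{arbitrary} element of $H^{1/2}(\partial\Omega)$ satisfying (\ref{116}); Section 5 only assumes $\kappa=0$, so the smallness condition (\ref{small}) is not available here and cannot be invoked. Moreover, even granting (\ref{small}), your absorption does not close: in $J_g(u_n)-\frac{1}{s}\langle J_g'(u_n),u_n\rangle$ the good zeroth-order term carries the coefficient $\left(\frac{1}{2}-\frac{1}{s}\right)m^2$, while the bad term $-\frac{q^2}{2}\int_\Omega\chi(\varphi_n+\chi)u_n^2\,dx$ is only bounded below by $-q^2\|\chi\|_\infty^2\|u_n\|_2^2$; the hypothesis $m^2-q^2\chi^2\ge 0$ gives $q^2\|\chi\|_\infty^2\le m^2$, not $q^2\|\chi\|_\infty^2\le\left(\frac{1}{2}-\frac{1}{s}\right)m^2$, and the leftover positive term $\frac{q^2}{s}\int_\Omega(\varphi_n+\chi)^2u_n^2\,dx$ does not repair this (a pointwise minimization in $|\varphi_n+\chi|$ shows the combined coefficient can still be negative of size comparable to $q^2\|\chi\|_\infty^2$). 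What you actually obtain is an estimate of the form $\|\nabla u_n\|_2^2\le C\left(1+\|\nabla u_n\|_2\right)+C'\|u_n\|_2^2$, which by itself does not yield boundedness in $H^1_0(\Omega)$.

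This is precisely where the paper argues differently: it keeps the $\|u_n\|_2^2$ terms on the right-hand side, arriving at (\ref{priori}), and then excludes unboundedness by contradiction. If $\|\nabla u_n\|_2\to+\infty$, then (\ref{priori}) forces $\|u_n\|_2^2\ge c\,\|\nabla u_n\|_2^2\to+\infty$; feeding this back into the expression for $J_g(u_n)$ and using the superquadratic lower bound $(\mathbf{G}_2)$ together with (\ref{limint}) gives $J_g(u_n)\le c_{10}\|u_n\|_2^2-c_{11}\|u_n\|_2^s+b_2|\Omega|\to-\infty$ because $s>2$, contradicting the boundedness of $\{J_g(u_n)\}$. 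Replace your absorption claim by this contradiction argument (no smallness of $\chi$ is then needed anywhere); the remainder of your proof can stand as written.
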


\begin{proof}
Let $\left\{u_{n}\right\}\subset H^1_0\left(\Omega\right)$ such that
\begin{eqnarray}
&& \left|J_{g}\left(u_n\right)\right|\le c                                                               \label{PS1}
\\
&& J_{g}^{\prime}\left(u_{n}\right)\rightarrow 0 .                                                          \label{PS2}
\end{eqnarray}

As before, we set $\varphi_n=\varphi_{u_n}$ and we use $c_i$ to denote suitable positive constants.
By (\ref{PS1})
\begin{eqnarray}
\frac{1}{2}\left\|\nabla u_n\right\|_2^2 &\le& c +\int_\Omega G\left(x,u_n\right)\,dx
+\frac{q^2}{2}\int_\Omega \chi\left(\varphi_n+\chi\right)u_n^2\,dx +\frac{m^2}{2}\|u_n\|^{2}_{2}
\nonumber \\
& \le & c_1+ \frac{1}{s}\int_{\left\{x\in\Omega :\left|u_n\left(x\right)\right|\ge r\right\}}
g\left(x,u_n\right)u_n\,dx
+c_2\left\|u_n\right\|_2^2 \nonumber  \\
& \le &c_3+ \frac{1}{s}\int_\Omega g\left(x,u_n\right)u_n\,dx
+c_2\left\|u_n\right\|_2^2 .                                             \label{1relaz}
\end{eqnarray}
On the other hand, by (\ref{jprimo}) and (\ref{PS2}),
\begin{align*}
\left|\left\langle J_{g}^{\prime}\left(u_n\right),u_n\right\rangle\right|=&
\left|\left\| \nabla u_n\right\|_2^2+m^2\left\|u_n \right\|^2_2
-q^2\int_\Omega\left(\varphi_n+\chi\right)^2u_n^2\,dx
-\int_\Omega g\left(x,u_n\right)u_n\,dx \right|\\
\leq &  \,c_4 \left\|\nabla u_n\right\|_2
\end{align*}
and so
\begin{eqnarray}
\int_\Omega g\left(x,u_n\right)u_n\,dx
& \le & c_4\left\|\nabla u_n\right\|_2+\left\|\nabla u_n\right\|_2^2+m^2\left\|u_n\right\|_2^2
-q^2\int_\Omega\left(\varphi_n+\chi\right)^2u_n^2\,dx  \nonumber \\
& \le & c_4\left\|\nabla u_n\right\|_2+\left\|\nabla u_n\right\|_2^2+m^2\left\|u_n\right\|_2^2.                                                      \label{2relaz}
\end{eqnarray}
Hence, substituting (\ref{2relaz}) in (\ref{1relaz}) we easily find
\begin{equation}
\frac{s-2}{2s}\left\|\nabla u_n\right\|_2^2\le c_3
+\frac{c_4}{s}\left\|\nabla u_n\right\|_2+\|\chi\|^2_\infty\left\|u_n\right\|_2^2
+m^{2}\frac{s+2}{2s}\|u_n\|_2^2.                   \label{priori}
\end{equation}
Now we claim that $\left\{u_n\right\}$ is bounded in $H^1_0\left(\Omega\right)$.
Otherwise by (\ref{priori})
$$
\left\| u_n\right\|_2^2\ge c_5\left\|\nabla u_n\right\|_2^2
-c_6\left\|\nabla u_n\right\|_2-c_7
$$
and, for $n$ sufficiently large, we have
\[
 \left\|u_n\right\|_2^2\ge c_8\left\|\nabla u_n\right\|_2^2\rightarrow +\infty.
\]
So, using  $\left(\textbf{G}_2\right)$ and (\ref{limint}), we deduce
\begin{align*}
J_{g}\left(u_n\right)& =  \frac{1}{2}\left\|\nabla u_n\right\|_2^2
+\frac{m^2}{2}\left\| u_n \right\|_2^2
-\frac{q^2}{2}\int_\Omega\chi\left(\varphi_n+\chi\right)u_n^2\,dx
-\int_\Omega G\left(x,u_n\right)\,dx  \nonumber \\
& \le  \frac{1}{2}\left\|\nabla u_n\right\|_2^2+
c_9\left\|u_n\right\|^2_2
-b_1\left\|u_n\right\|_s^s+b_2\left|\Omega\right|   \nonumber \\
& \le c_{10}\left\|u_n\right\|^2_2-c_{11}\left\|u_n\right\|^s_2+
b_2\left|\Omega\right|\rightarrow -\infty,
\end{align*}
which contradicts (\ref{PS1}).
So $\left\{u_n\right\}$ is bounded and, up to a subsequence,
\begin{equation*}
u_n\rightharpoonup u \mbox{ in } H^1_0\left(\Omega\right) .
\end{equation*}

We have to prove that the convergence is strong.
We know that
\begin{equation}
\Delta u_n = m^2u_n  -q^2\left(\varphi_n +\chi\right)^2 u_n -
g\left(x,u_n\right) -J_{g}^{\prime}\left(u_n\right).                                                      \label{J'}
\end{equation}
The sequences $\left\{J^{\prime}\left(u_n\right)\right\}$, $\left\{ u_n\right\} $ and
$\left\{g\left(x,u_n\right)\right\}$ are bounded.
Finally, by Corollary \ref{estimates}, $\{\varphi_n + \chi \}$ is bounded
in $L^{\infty}\left(\Omega\right)$, then
$\{ (\varphi_n+\chi)^2 u_n\}$
is bounded in $L^2\left(\Omega\right)$.
Therefore the right hand side of (\ref{J'}) is a bounded sequence in
$H^{-1}\left(\Omega\right)$. By standard arguments the proof is complete.
\end{proof}

Finally we notice that, by $\left(\textbf{G}_2\right)$,
\begin{align*}
J_{g}\left( u\right)  &  \leq \frac{1}{2}\left\| \nabla u\right\| _{2}^{2}+
\left(q^2\left\|\chi\right\|^2_\infty+\frac{m^2}{2}\right)\left\| u\right\| _{2}^{2} -\int_{\Omega}G(x,u)\,dx\\     \nonumber
&   \le \frac{1}{2}\left\| \nabla u\right\| _{2}^{2}+ \left(q^2\left\|\chi\right\|^2_\infty+\frac{m^2}{2}\right) \left\| u\right\|
_{2}^{2} -b_{1}\left\| u\right\| _{s}^{s}+b_{2}\left| \Omega\right|.
\end{align*}
Hence, if $V$ is a finite dimensional subspace of $H^{1}_{0}\left( \Omega\right) $, then

\begin{equation} \label{dim fin}
\lim_{\substack{ u\in V \\ \left\| \nabla u\right\|_2 \rightarrow +\infty }}
J_g\left(u\right)=-\infty.
\end{equation}

\subsection{Proof of (a)}
Let $\left\{ \lambda_{j}\right\} $ denote the sequence of the eigenvalues
of $-\Delta$ with Dirichlet boundary conditions. Taking into account Remark \ref{gil}, assume that
\[
q^2\left\| \chi\right\| ^{2}_{\infty}<\lambda_{1}+m^{2}.
\]

>From (\ref{eta0}), (\ref{intmeno}) and $\left(\textbf{G}_1\right)$ we deduce
\begin{align*}
J_{g}\left( u\right)  &  \geq
\frac{1}{2}\left[ \left\| \nabla u\right\| _{2}^{2}
+\left(m^{2}-q^2\left\| \chi\right\|^2_{\infty}\right)
\left\| u\right\| _{2}^{2}\right] -\frac{\varepsilon}{2}\left\| u\right\|
_{2}^{2} -A\left\| u\right\| _{p}^{p}\\
&  \ge\frac{\lambda_{1}+m^{2}-q^2\left\| \chi\right\| ^{2}_{\infty}-\varepsilon} {2\lambda_{1}%
}\left\| \nabla u\right\| _{2}^{2}-A^{\prime}\left\| \nabla u\right\| _{2}^{p},%
\end{align*}
with $A,A'>0$ depending on $\varepsilon>0$. Choosing $\varepsilon$
sufficiently small, we deduce
\[
J_{g}\left( u\right)  \ge c\left\| \nabla u\right\| _{2}^{2}-A^{\prime}\left\|
\nabla u\right\| _{2}^{p}
\]
with $c>0$. Hence $J_g$ has a strict local minimum in $0$.

Taking into account (\ref{dim fin}), the classical Mountain Pass
Theorem of Am\-bro\-set\-ti-Rabinowitz applies (see \emph{e.g.}
\cite{rab}) and we deduce the existence of a nontrivial solution.

\subsection{Proof of (b)}

Since $g$ is odd, the functional $J_g$ is even and we use the
$\mathbf{Z}_{2}$-Mountain Pass Theorem as stated in \cite{rab}.

\begin{theorem}
\label{Z2MP} Let $E$ be an infinite dimensional Banach space and let $I\in
C^{1}\left(  E,\mathbf{R}\right)  $ be even, satisfy the Palais-Smale
condition and $I\left(  0\right)  =0.$ If $E=V\oplus X,$ where $V$ is finite
dimensional and $J$ satisfies

\begin{enumerate}
\item there are constants $\rho,\alpha>0$ such that $\left.  I\right\vert
_{\partial B_{\rho}\cap X}\geq\alpha,$ and

\item for each finite dimensional subspace $\tilde{E}\subset E,$ there is an
$R=R(\tilde{E})$ such that $I\leq0$ on $E\setminus B_{R(\tilde{E})},$
\end{enumerate}
then $I$ possesses an unbounded sequence of critical values.
\end{theorem}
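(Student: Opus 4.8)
The statement is the $\mathbf{Z}_2$-equivariant Mountain Pass Theorem of Ambrosetti--Rabinowitz, and I would prove it by equivariant minimax theory based on the Krasnoselskii genus. The group $\mathbf{Z}_2$ acts on $E$ by $u\mapsto -u$; since $I$ is even this action leaves $I$ invariant and, because $I(0)=0$, the origin is the only fixed point. Let $\Sigma$ be the class of closed symmetric sets $A=-A$ with $0\notin A$ and let $\gamma\colon\Sigma\to\mathbf{N}\cup\{+\infty\}$ denote the genus. The properties of $\gamma$ that drive the argument are: monotonicity and subadditivity; that every compact $A\in\Sigma$ has finite genus and admits a symmetric neighborhood of the same genus; the dimension property $\gamma(\partial B_\rho\cap W)=\dim W$ for a linear subspace $W$; and the Borsuk--Ulam intersection property, $\gamma(A\cap Y)\ge\gamma(A)-d$ whenever $Y\subset E$ is a closed subspace of codimension $d$. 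Since $E=V\oplus X$ with $\dim V=m<\infty$, the subspace $X$ has codimension $m$, which is exactly the bookkeeping the intersection property will exploit.

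\emph{Minimax levels.} I would introduce a pseudo-index $i^{*}$ relative to the sphere $S:=\partial B_\rho\cap X$ and to the group $\mathcal{G}$ of odd homeomorphisms $h$ of $E$ with $I\circ h\le I$ (so that $\mathrm{id}\in\mathcal G$, $\mathcal G$ is closed under composition, and the maps generated by the equivariant descending flow discussed below belong to $\mathcal G$): for $A\in\Sigma$ set $i^{*}(A)=\min_{h\in\mathcal G}\gamma(h(A)\cap S)$. Then define, for $j\in\mathbf{N}$,
\[
c_j=\inf\bigl\{\,\sup_{u\in A}I(u)\ :\ A\in\Sigma,\ i^{*}(A)\ge j\,\bigr\}.
\]
By construction $i^{*}$ is monotone and supervariant under $\mathcal G$ and satisfies $i^{*}\le\gamma$, so the classes shrink with $j$ and $c_1\le c_2\le\cdots$. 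The lower bound is uniform: taking $h=\mathrm{id}$ shows any admissible $A$ meets $S$, hence hypothesis 1 gives $\sup_A I\ge\inf_S I\ge\alpha$, so $c_j\ge\alpha>0$ for every $j$. Finiteness, $c_j<+\infty$, comes from hypothesis 2: in each dimension it supplies symmetric sets of the required pseudo-index on which $I$ is bounded above (indeed $\le 0$ outside a ball), via the intersection property and the codimension-$m$ count.

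\emph{From levels to critical values.} The analytic engine is the equivariant deformation lemma. Because $I$ is even, $I'$ is odd, so a pseudo-gradient vector field may be chosen odd and its negative flow is then $\mathbf{Z}_2$-equivariant; using the Palais--Smale condition this produces, for every regular value, an odd homeomorphism in $\mathcal G$ pushing $\{I\le c+\varepsilon\}$ into $\{I\le c-\varepsilon\}$ off the critical set. The standard minimax deduction --- take an admissible $A$ with $\sup_A I$ close to $c_j$, apply the deformation, and use supervariance of $i^{*}$ --- then shows that each $c_j$ is a critical value. Finally I would show $\{c_j\}$ is unbounded by contradiction: if $c_j\le M$ for all $j$, then by Palais--Smale the critical set $K$ carrying all critical values in $[\alpha/2,M+1]$ is compact and symmetric with $0\notin K$, so $k:=\gamma(K)<\infty$ and $K$ has a symmetric neighborhood $N$ of genus $k$; choosing an admissible $A$ with $i^{*}(A)\ge k+1$ and $\sup_A I\le M+\varepsilon$, subadditivity gives $i^{*}(A\setminus N)\ge 1$, while the equivariant deformation lemma maps $A\setminus N$ below the level $\alpha$, whence $c_1<\alpha$, contradicting $c_1\ge\alpha$. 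Hence $c_j\to+\infty$ and $I$ possesses an unbounded sequence of critical values.

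\emph{Main obstacle.} The genuinely delicate point is not the genus bookkeeping but the construction and verification of the equivariant machinery: that the pseudo-gradient flow can be taken odd so that all relevant deformations lie in $\mathcal G$, and that the pseudo-index $i^{*}$ is simultaneously monotone, supervariant, and subadditive, with the linking (Borsuk--Ulam) computations pinning down both $c_j\ge\alpha$ and $c_j<+\infty$ for every $j$. Once this $\mathbf{Z}_2$-equivariant deformation-and-index framework is in place, the finiteness and additivity of the genus and the abstract minimax passage to critical values are routine.
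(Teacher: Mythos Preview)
The paper does not prove this theorem; it is quoted from Rabinowitz's monograph \cite{rab} and used as a black box, so there is no ``paper's own proof'' to compare against. Your sketch is a correct outline of the classical argument via Krasnoselskii genus and $\mathbf{Z}_2$-equivariant deformation, and you have correctly identified the substantive point (building an odd pseudo-gradient flow so that the deformations lie in $\mathcal{G}$, together with the Borsuk--Ulam linking that pins down $c_j\ge\alpha$ and $c_j<+\infty$).

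For comparison, Rabinowitz's own presentation organizes the minimax somewhat differently: rather than the Bartolo--Benci--Fortunato pseudo-index $i^{*}$ you use, he works with classes $\Gamma_j$ of odd continuous maps defined on finite-dimensional balls (equal to the identity on the boundary sphere, where $I\le 0$ by hypothesis~2) and sets $c_j=\inf_{h\in\Gamma_j}\max I\circ h$; the intersection/linking step is then a direct Borsuk--Ulam argument showing the image of any such $h$ meets $\partial B_\rho\cap X$. The two frameworks are equivalent and each has its advantages: the pseudo-index route makes the multiplicity counting (the ``$c_j=c_{j+1}=\cdots=c_{j+k}$ implies $\gamma(K_c)\ge k+1$'' step) transparent, while Rabinowitz's map-based classes avoid introducing the auxiliary semigroup $\mathcal{G}$ and the supervariance checks. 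Either would be acceptable here.
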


Taking into account (\ref{dim fin}), in order to apply Theorem \ref{Z2MP}, we have to prove the geometrical property stated in \emph{(1)}.

We distinguish two cases:
\begin{enumerate}
 \item [(a)] If $q^2\left\| \chi\right\| ^{2}_{\infty}-m^{2}<\lambda_{1}$ then,
using the same estimates given in the previous subsection,
Theorem \ref{Z2MP} applies with $V=\left\{ 0\right\} $.

\item [(b)] If $\lambda_{1}\le q^2\left\| \chi\right\| ^{2}_{\infty}-m^{2}$, we set
\[
k=\min\left\{ j\in\mathbf{N}:\;q^2\left\| \chi\right\| ^{2}_{\infty}-m^{2}<\lambda
_{j}\right\} ,
\]
and we consider
\[
V=\bigoplus_{j=1}^{k-1} M_{j},\;\;\;\;\;X=V^{\perp}=\overline{\bigoplus
_{j=k}^{+\infty}M_{j}}.
\]
where $M_{j}$ is the finite dimensional eigenspa\-ce corresponding to $\lambda_j$.

Since
\[
\lambda_{k}=\min\left\{ \frac{\left\| \nabla v\right\| _{2}^{2}}{\left\|
v\right\| ^{2}_{2}}:\; v\in X,\; v\neq0\right\} ,
\]
for every $u\in X$ we have
\[
J_{g}\left( u\right) \geq\frac{\lambda_{k}+m^{2}-q^2\left\| \chi\right\| ^{2}_{\infty}}
{2\lambda_{k}}\left\| \nabla u\right\| _{2}^{2}-\int_{\Omega}G(x,u)dx.
\]
Similar estimates to those used in the previous case show that $J$ is strictly
positive on a sphere in $X$.
\end{enumerate}

In both cases we get the existence of infinitely many critical points $\{{u_i}\}$ such that
$$
J_{g}(u_i)\rightarrow +\infty.
$$
Remark \ref{unifbound} gives the uniform estimate on $\{\varphi_{u_i}\}$.
Finally we notice that,
by $\left(\textbf{G}_1\right)$,
\begin{align*}
J_{g}\left(u_i\right) & =  \frac{1}{2}\left\|\nabla u_i\right\|_2^2
+\frac{m^2}{2}\left\| u_i \right\|_2^2
-\frac{q^2}{2}\int_\Omega\chi\left(\varphi_i+\chi\right)u_i^2\,dx
-\int_\Omega G\left(x,u_i\right)\,dx\\
& \le  c_1\|\nabla u_i \|_2^2+c_2\left\| \nabla u_i\right\|_2^p.
\end{align*}
Hence $\left\|\nabla u_i\right\|_2\to + \infty$ and this completes
the proof.

\end{document}